\crefname{theorem}{Theorem}{Theorems}
\crefname{thm}{Theorem}{Theorems}
\crefname{lemma}{Lemma}{Lemmas}
\crefname{lem}{Lemma}{Lemmas}
\crefname{remark}{Remark}{Remarks}
\crefname{prop}{Proposition}{Propositions}
\crefname{defn}{Definition}{Definitions}
\crefname{corollary}{Corollary}{Corollaries}
\crefname{conjecture}{Conjecture}{Conjectures}
\crefname{question}{Question}{Questions}
\crefname{chapter}{Chapter}{Chapters}
\crefname{section}{Section}{Sections}
\crefname{figure}{Figure}{Figures}
\crefname{example}{Example}{Examples}
\theoremstyle{plain}
\newtheorem{thm}{Theorem}[section]
\newtheorem{lemma}[thm]{Lemma}
\newtheorem{theorem}[thm]{Theorem}
\newtheorem{corollary}[thm]{Corollary}
\newtheorem{prop}[thm]{Proposition}
\theoremstyle{definition}
\theoremstyle{remark}
\newtheorem{remark}[thm]{Remark}
\numberwithin{equation}{section}
\renewcommand{\P}{\mathbb P}
\newcommand{\E}{\mathbb E}
\newcommand{\R}{\mathbb R}
\newcommand{\Z}{\mathbb Z}
\newcommand{\N}{\mathbb N}
\newcommand{\cF}{\mathcal F}
\newcommand{\sA}{\mathscr A}
\newcommand{\sC}{\mathscr C}
\newcommand{\bbH}{\mathbb H}
\newcommand{\eps}{\varepsilon}
\def\P{\mathbb{P}}
\DeclareMathSymbol{\leqslant}{\mathalpha}{AMSa}{"36} % nicer `smaller or equal'
\DeclareMathSymbol{\geqslant}{\mathalpha}{AMSa}{"3E} % nicer `larger or equal'
\DeclareMathSymbol{\eset}{\mathalpha}{AMSb}{"3F}     % nicer `emptyset'
\renewcommand{\epsilon}{\varepsilon}
\tikzset{nomorepostaction/.code=\let\tikz@postactions\pgfutil@empty}
\newcommand\nxleftrightarrow[2][]{%
  \mathrel{\tikz[baseline=-.7ex] \path node[slash underlined,draw,<->,anchor=south] {\(\scriptstyle #2\)} node[anchor=north] {\(\scriptstyle #1\)};}}
\title{\bf The critical two-point function for long-range percolation on the hierarchical lattice}
\renewenvironment{abstract}
 {\par\noindent\textbf{\abstractname.}\ \ignorespaces}
 {\par\medskip}
\author{{\bf Tom Hutchcroft}}
\begin{document}

\date{\small{\today}}

\maketitle

\setstretch{1.1}

\begin{abstract} We prove up-to-constants bounds on the two-point function (i.e., point-to-point connection probabilities) for critical long-range percolation on the $d$-dimensional hierarchical lattice. More precisely, we prove that if we connect each pair of points $x$ and $y$ by an edge with probability  $1-\exp(-\beta\|x-y\|^{-d-\alpha})$, where $0<\alpha<d$ is fixed and $\beta\geq 0$ is a parameter, then the critical two-point function satisfies
\[
\P_{\beta_c}(x\leftrightarrow y) \asymp \|x-y\|^{-d+\alpha}
\]
for every pair of distinct points $x$ and $y$. We deduce in particular that the model has mean-field critical behaviour when $\alpha<d/3$ and does \emph{not} have mean-field critical behaviour when $\alpha>d/3$.

% We then give a new, more robust derivation of mean-field critical behaviour from the triangle condition which allows us to prove that mean-field critical behaviour holds up to possible polylogarithmic corrections when $\alpha =d/3$.

% $\Z^d$ in which each two vertices $x$ and $y$ are connected by an edge with probability $1-\exp(-\beta \|x-y\|^{-d-\alpha})$. 
%  It is a  theorem of Noam Berger (\emph{Commun.\ Math.\ Phys.}, 2002) that if $0<\alpha<d$ then there is no infinite cluster at the critical parameter $\beta_c$. We give a new, quantitative proof of this theorem establishing the power-law upper bound
% \[
% \P_{\beta_c}\bigl(|K|\geq n\bigr) \leq C n^{-(d-\alpha)/(2d+\alpha)}
% \]
% for every $n\geq 1$, where $K$ is the cluster of the origin. We believe that this is the first rigorous power-law upper bound for a Bernoulli percolation model that is neither planar nor expected to exhibit mean-field critical behaviour.

% As part of the proof, we establish a universal inequality implying that the maximum size of a cluster in percolation on any finite graph is of the same order as its mean with high probability. We apply this inequality to derive a new rigorous hyperscaling inequality $(2-\eta)(\delta+1)\leq d(\delta-1)$ 
% relating the cluster-volume exponent $\delta$ and two-point function exponent $\eta$.
\end{abstract}

\section{Introduction}
\label{sec:intro}

In this paper we study critical long-range percolation on the \emph{hierarchical lattice}.  The hierarchical lattice $\bbH^d_L$ is in some ways similar to the usual Euclidean lattice $\Z^d$ but has additional symmetries and an exact recursive nesting structure that often makes hierarchical models of statistical mechanics much easier to understand than their Euclidean counterparts. 
% Statistical mechanics on hierarchical lattices were first considered by Dyson \cite{MR436850} and Baker \cite{baker1972ising} in the context of the Ising model.
First introduced by Dyson \cite{MR436850} to study the Ising model in 1969, there is now an extensive literature studying statistical mechanics on hierarchical lattices, with notable works studying the critical behaviour of the Ising model \cite{MR1552611,MR1882398}, the $\varphi^4$ model \cite{koch1994nontrivial,MR3969983}, and self-avoiding walk \cite{MR2000929,MR1143413,MR2000928}.
Hierarchical models have been particularly popular when e.g.\ analyzing spin systems via the renormalization group, where the exact recursive structure is extremely helpful \cite{MR709462,MR3969983}. We refer the reader to e.g.\ \cite{MR2883859,MR3969983} for further background on hierarchical models, and to \cite[Section 4.2]{MR3969983} in particular for a detailed overview of the literature.

% For Bernoulli percolation the theory of hierarchical models is not as well developed as for spin systems
% Hierarchical models have received much less attention within the percolation literature, although a good qualitative understanding of the phase diagram has been established by Koval, Meester, and Trapman \cite{MR2955049},
While several papers have been written about Bernoulli percolation on hierarchical lattices \cite{MR2955049,MR3035740,MR3769822,MR4132522}, we think it is fair to say that hierarchical models have received rather less attention within percolation theory than within other parts of mathematical physics, 
 and one goal of this paper is to attract more interest to hierarchical models within the percolation community more broadly. To this end, let us note by way of advertisement that not only do hierarchical models offer a much more tractable alternative to low-dimensional Euclidean models,
  % whose critical behaviour is beyond the scope of existing methods,
   they are also arguably \emph{more realistic} than Euclidean models as descriptions of certain real-world phenomena in epidemiology and the social sciences \cite{MR3464199,gandolfi2013percolation}. Let us also remark that existing analyses of other hierarchical models at criticality are mostly based on block-renormalization of spins and therefore do not apply to percolation, which is not rigorously known to have any spin-system representations.
   % We state several open problems concerning critical percolation on the hierarchical lattice in \cref{sec:closing}.
 % This gives strong motivation to study e.g.\ the contact process and SIR models on hierarchical lattices, and we hope some of our readers will be inspired to take this up in future work.

% \medskip

% \noindent
% \hrulefill

% \medskip

\begin{figure}[t!]
\centering
\includegraphics[height=9.8cm]{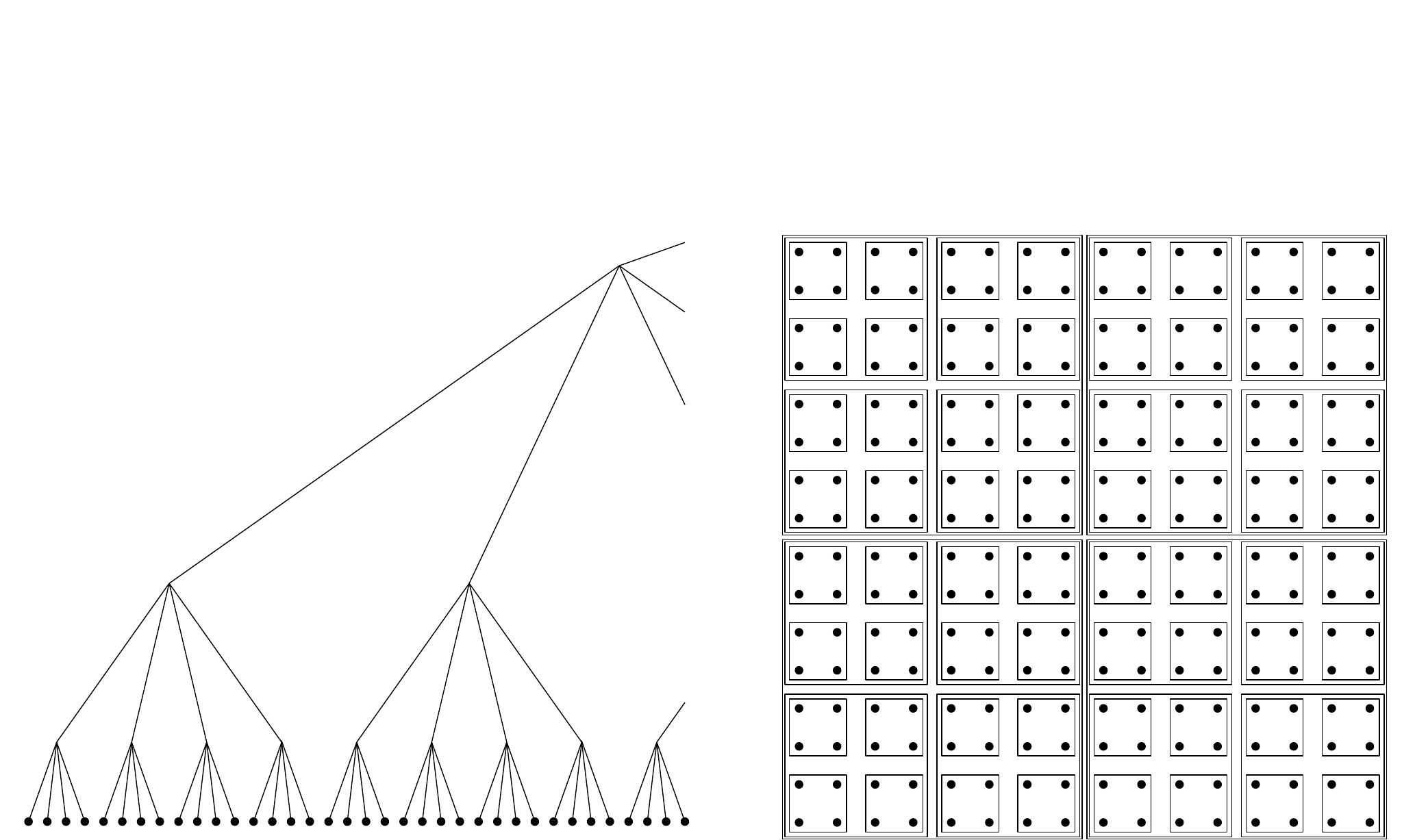}
\caption{Two graphical representations of the hierarchical lattice $\mathbb{H}^2_2$, which can be identified with $\mathbb{H}^1_4$ by a bijection that transforms distances by a power. In the picture on the left, only the leaves of the tree represent vertices of $\bbH^1_4$. In the picture on the right, the distance between two points is equal to the side-length of the smallest dyadic box containing both points.}
\end{figure}

\subsection{The model}

Let us now define the hierarchical lattice.
Let $d\geq 1$, $L\geq 2$, and let $\mathbb{T}^d_L=(\Z/L\Z)^d$ be the discrete $d$-dimensional torus of side length $L$. 
The \textbf{hierarchical lattice} $\mathbbm{H}^d_L$ is defined to be the countable abelian group $\bigoplus_{i=1}^\infty \mathbb{T}^d_L = \{x =(x_1,x_2,\ldots) \in (\mathbb{T}^d_L)^\N : x_i =0$ for all but finitely many $i\geq 0\}$ equipped with the group-invariant ultrametric defined by
\[\|y-x\| = 
\begin{cases} 0 & x=y\\
L^{h(x,y)} & x \neq y
\end{cases} \qquad \text{ where }h(x,y)=\max\{i \geq 1: x_i \neq y_i\}.
\]
We will also use the `Japanese bracket' notation $\langle x\rangle := 1\vee\|x\|$ to avoid diving by zero.
Note that the hierarchical lattice $\mathbbm{H}^d_L$ is indeed $d$-dimensional in the sense that if $B(0,r)$ denotes the ball of radius $r$ around the origin then
\[
\frac{r^d}{L^d} \leq  |B(0,r)| = |B(0,L^{\lfloor \log_L r\rfloor})| = \Bigl|\bigoplus_{i=1}^{\lfloor \log_L r\rfloor} \mathbb{T}^d_L\Bigr| = L^{d \lfloor \log_L r\rfloor}\leq r^d
\]
for every $r\geq 1$. As mentioned above, hierarchical lattices have much more symmetry than their Euclidean counterparts, and this symmetry can often be very useful when studying statistical-mechanics models on them. Indeed, the hierarchical lattice $\bbH^d_L$ is \textbf{distance-transitive}, meaning that if $w,x,y,z \in \bbH^d_L$ are such that $\|w-x\|=\|y-z\|$ then there exists an isometry $\gamma$ of $\bbH^d_L$ such that $\gamma(w)=y$ and $\gamma(x)=z$. 
% \medskip

A function $J: \bbH^d_L \to [0,\infty)$ is said to be \textbf{symmetric} if $J(x)=J(-x)$ for every $x\in \bbH^d_L$ and is said to be \textbf{integrable} if $\sum_{x\in \bbH^d_L} J(x)<\infty$. We say that $J$ is \textbf{radially symmetric} if $J(x)$ can be expressed as a function of $\|x\|$, i.e., if $J(x)=J(y)$ for every $x,y\in \bbH^d_L$ such that $\|x\|=\|y\|$. Equivalently, $J$ is radially symmetric if it is invariant under isometries of $\bbH^d_L$.
For our purposes, a particularly interesting choice of integrable, radially symmetric $J$ is given by
$J(x)=\langle x\rangle^{-d-\alpha}$, which is integrable for $\alpha>0$. 
Given a symmetric, integrable function $J:\bbH^d_L\to [0,\infty)$ and $\beta \geq 0$, \textbf{long-range percolation} on $\bbH^d_L$ is defined to be the random graph with vertex set $\bbH^d_L$ in which each pair $\{x,y\}$ is included as an edge of the graph independently at random with inclusion probability $1-e^{-\beta J(x-y)}$. (The precise form of this function is not very important; the important thing is that it belongs to $(0,1)$ when $J(x-y)$ is positive and satisfies $1-e^{-\beta J(x-y)} \sim \beta J(x-y)$ when $J(x-y)$ is small.) 
We write $\P_\beta=\P_{\beta,J}$ and $\E_\beta = \E_{\beta,J}$ for probabilities and expectations taken with respect to the law of the resulting random graph.
The integrability assumption on $J$ ensures that this graph is locally finite (i.e., has finite vertex degrees) almost surely. The connected components of the resulting random graph are known as \textbf{clusters} and the \textbf{critical probability} $\beta_c=\beta_c(d,L,J)$ is defined by
\[
\beta_c = \inf\bigl\{\beta \geq 0: \text{ there exists an infinite cluster with positive probability}\bigr\}.
\]
% Note by Kolmogorov's 0-1 law that an infinite cluster exists almost surely when $\beta>\beta_c$.
 Elementary path-counting arguments yield that $\beta_c \geq 1/\sum_x J(x)>0$ when $J$ is integrable.
% \medskip

 For $J$ of the form $J(x)=\langle x\rangle^{-d-\alpha}$ with $\alpha>0$, it has been shown independently by Koval, Meester, and Trapman \cite{MR2955049} and Dawson and Gorostiza \cite{MR3035740} that $\beta_c<\infty$ 
 (i.e., that the phase transition is non-trivial)
  if and only if $\alpha<d$. Koval, Meester, and Trapman \cite{MR2955049} have also shown under the same assumptions that the phase transition is \emph{continuous}, meaning that there is no infinite cluster at $\beta_c$ almost surely. This is a hierarchical version of a theorem of Berger \cite{MR1896880}, which establishes a similar result for long-range percolation on $\Z^d$ with $\alpha<d$. (In contrast, long-range percolation on $\Z$ with $\alpha=1$ is known to undergo a \emph{discontinuous} phase transition by a theorem of Aizenman and Newman \cite{MR868738}; see also \cite{duminil2020long} for a new proof of this result.)  Both theorems are proven by showing that the set $\{\beta>0 :$ there is an infinite cluster at $\beta\}$ is open, and do not yield any quantitative control of the models at criticality. 

In our recent work \cite{hutchcroft2020power} we established new, quantitative versions of both continuity theorems, yielding power-law upper bounds on the distribution of the cluster of the origin in critical long-range percolation on both $\Z^d$ and $\bbH^d_L$. The power-law bounds proven in \cite{hutchcroft2020power} are not expected to be sharp, and it remained open to compute the exact critical exponents describing these models.

\subsection{Our results}

 The goal of this paper is to establish a sharp quantitative understanding of critical long-range percolation on the hierarchical lattice by proving up-to-constants estimates on the two-point function (i.e., on the point-to-point connection probabilities $\P_\beta(x\leftrightarrow y)$). Before stating our main result, let us briefly introduce some further relevant definitions. 
For each $n\geq 0$ we write $\Lambda_n=\bigoplus_{i=1}^n \mathbb{T}^d_L = \{y\in \mathbb{H}_L^d: \langle y\rangle \leq L^n\}$ and write $\Lambda_n(x)=\Lambda_n + x = \{y\in \mathbb{H}_L^d: \langle y-x\rangle \leq L^n\}$ for each $n\geq 0$ and $x \in \mathbb{H}^d_L$. Given $x,y \in \bbH^d_L$ and $A \subseteq \bbH^d_L$ we write $\{x \xleftrightarrow{A} y\}$ for the event that $x$ and $y$ are connected by a path in $A$.

\begin{theorem}
\label{thm:main}
Let $J:\mathbb{H}^d_L \to [0,\infty)$ be a symmetric, integrable function, let $0<\alpha<d$, and suppose that there exist positive constants $c$ and $C$ such that $c \|x\|^{-d-\alpha} \leq J(x) \leq C\|x\|^{-d-\alpha}$ for every $x\in \mathbb{H}^d_L \setminus \{0\}$. Then there exist positive constants $a$ and $A$ depending only on $d$, $L$, $\alpha$, $c$, and $C$ such that
\[
a L^{-(d-\alpha)n}  \leq \frac{1}{|\Lambda_n|} \sum_{x\in\Lambda_n \setminus \Lambda_{n-1}}\P_{\beta_c}(0 \xleftrightarrow{\Lambda_n} x) \leq \frac{1}{|\Lambda_n|} \sum_{x\in\Lambda_n}\P_{\beta_c}(0 \leftrightarrow{} x) \leq A L^{-(d-\alpha)n}
\]
for every $n \geq 1$.
\end{theorem}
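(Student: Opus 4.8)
\emph{Reductions and the basic recursion.} The middle inequality in the display is immediate: $\{0\xleftrightarrow{\Lambda_n}x\}\subseteq\{0\leftrightarrow x\}$ and the left sum runs over a subset of $\Lambda_n$. So everything reduces to the upper bound $\sum_{x\in\Lambda_n}\P_{\beta_c}(0\leftrightarrow x)=\E_{\beta_c}|K_0\cap\Lambda_n|\le A L^{\alpha n}$ and the lower bound $\sum_{x\in\Lambda_n\setminus\Lambda_{n-1}}\P_{\beta_c}(0\xleftrightarrow{\Lambda_n}x)\ge a L^{\alpha n}$, and both will come from analysing the truncated susceptibility $\chi_n(\beta):=\E_\beta|K^{\Lambda_n}_0|=\sum_{x\in\Lambda_n}\P_\beta(0\xleftrightarrow{\Lambda_n}x)$, where $K^{\Lambda_n}_0$ is the cluster of $0$ in the subgraph spanned by $\Lambda_n$. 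The engine is the self-similarity of $\bbH^d_L$: writing $\Lambda_{n+1}$ as a disjoint union of $L^d$ translates of $\Lambda_n$ (``sub-blocks''), every edge joining two distinct sub-blocks has the same endpoint-distance $L^{n+1}$, hence inclusion probability $p_{n+1}\asymp\beta L^{-(d+\alpha)(n+1)}$. Revealing the configurations inside the sub-blocks first and then the edges between them, $K^{\Lambda_{n+1}}_0$ is the union of the within-sub-block clusters reachable from $\mathcal C_0:=K^{\Lambda_n}_0$ in the resulting random ``block cluster-graph'', and a first-moment/BK computation on that graph (the ratio of mass reached in one extra block-step to mass reached so far is comparable to $(L^d-1)L^{dn}p_{n+1}\chi_n\asymp\beta L^{-\alpha(n+1)}\chi_n$) gives an upper recursion of the schematic form
\[
\chi_{n+1}(\beta)\le\frac{\chi_n(\beta)}{1-C_\star\beta L^{-\alpha(n+1)}\chi_n(\beta)}\qquad\text{whenever } C_\star\beta L^{-\alpha(n+1)}\chi_n(\beta)\le\tfrac12,
\]
with a matching lower recursion $\chi_{n+1}(\beta)\ge\chi_n(\beta)\bigl(1+c_\star\beta L^{-\alpha(n+1)}\chi_n(\beta)-\text{error}\bigr)$ provided one can control the distribution of $|\mathcal C_0|$ (see below). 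Putting $y_n:=\beta L^{-\alpha n}\chi_n(\beta)$ turns this into an approximate one-dimensional dynamical system $y_{n+1}\approx L^{-\alpha}y_n/(1-L^{-\alpha}y_n)$, which has an attracting fixed point at $0$ and a \emph{repelling} one at $y^\star\asymp1$; the orbit falls to $0$ (so $\chi_n$ stays bounded) on the subcritical side and runs away (so $\chi_n$ grows super-exponentially) on the supercritical side, and $\beta_c$ is precisely the separatrix value on which $\chi_n(\beta_c)\asymp L^{\alpha n}$.

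\emph{Upper bound.} I would run the upper recursion at a subcritical parameter $\beta<\beta_c$, where $\chi_n(\beta)\uparrow\chi(\beta)=\E_\beta|K_0|<\infty$ by sharpness of the phase transition (the Aizenman--Barsky argument applies in this setting). If $\chi_n(\beta)$ ever exceeded $CL^{\alpha n}$ for a suitably large universal constant $C$, the (lower) recursion would force $\chi_m(\beta)$ to blow up past any finite bound, contradicting $\chi_m(\beta)\le\chi(\beta)<\infty$; hence $\chi_n(\beta)\le CL^{\alpha n}$ for every $n$ and every $\beta<\beta_c$, and letting $\beta\uparrow\beta_c$ (monotone convergence) gives $\chi_n(\beta_c)\le CL^{\alpha n}$. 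It then remains to pass from $\chi_n$ to $\E_{\beta_c}|K_0\cap\Lambda_n|$, i.e.\ to bound the contribution of connections $0\leftrightarrow x$ with $x\in\Lambda_n$ that leave $\Lambda_n$. By the ultrametric such a connection must use a long edge exiting $\Lambda_m$ for some $m>n$ and then return; summing the ``single big jump'' costs over $m$ using the scaling of $\chi_m$ just obtained (and, if needed, the a priori power-law tail bounds of \cite{hutchcroft2020power} to limit how much of $K_0$ can sit outside $\Lambda_n$) shows this contribution is also $O(L^{\alpha n})$.

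\emph{Lower bound.} Again I would work on the subcritical side, now exploiting that $\chi(\beta)\to\infty$ as $\beta\uparrow\beta_c$ (sharpness, in the form of the Aizenman--Barsky differential inequality). For $\beta<\beta_c$ let $N(\beta)$ be the saturation scale, $L^{\alpha N(\beta)}\asymp\chi(\beta)$, so that $N(\beta)\to\infty$ as $\beta\uparrow\beta_c$. The lower recursion keeps $\chi_n(\beta)$ from decaying relative to $L^{\alpha n}$ before saturation while the upper recursion keeps it below $CL^{\alpha n}$, so $\chi_n(\beta)\asymp L^{\alpha n}$ for all $n\le N(\beta)$ with constants uniform in $\beta$; letting $\beta\uparrow\beta_c$ and using monotone convergence then yields $\chi_n(\beta_c)\ge a'L^{\alpha n}$ for every fixed $n$. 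Finally, to obtain the stated assertion with the sum over the shell and the connection restricted to $\Lambda_n$, write $\sum_{x\in\Lambda_n\setminus\Lambda_{n-1}}\P_{\beta_c}(0\xleftrightarrow{\Lambda_n}x)=\chi_n(\beta_c)-\E_{\beta_c}|K^{\Lambda_n}_0\cap\Lambda_{n-1}|$ and show, via the same recursion analysis, that the ``own-sub-block'' part is at most $(1-\epsilon)\chi_n(\beta_c)$ for some fixed $\epsilon>0$ — equivalently, that a uniformly positive fraction of the within-$\Lambda_n$ cluster of $0$ typically lies outside its own $\Lambda_{n-1}$-sub-block, which holds because the one-block-step contribution is a definite fraction of the total at the separatrix.

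\emph{Main obstacle.} The hard part is the \emph{lower} direction of the recursion at criticality. The cluster sizes $|K^{\Lambda_n}_0|$ are highly non-concentrated — indeed $\E_{\beta_c}|K_0\cap\Lambda_n|^2$ is of strictly larger order than $(\E_{\beta_c}|K_0\cap\Lambda_n|)^2$ when $\alpha>d/3$, which is precisely why mean-field behaviour fails there — so a naive second-moment/Paley--Zygmund argument loses too much, and even the crude lower recursion ``$\chi_{n+1}\gtrsim\chi_n+c_\star\beta L^{-\alpha(n+1)}\chi_n^2$'' requires controlling a \emph{truncated} first moment $\E[|K^{\Lambda_n}_0|\wedge T]$ at a level $T\asymp L^{\alpha n}$, which needs a tail bound $\P_{\beta_c}(|K_0|\ge k)\le Ck^{-\eta}$ with $\eta>1-\alpha/d$ that does not follow from Markov's inequality. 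I would obtain this by bootstrapping: feed in the (non-sharp) power-law estimates of \cite{hutchcroft2020power} as an a priori input, prove a preliminary version of the recursion and hence of the two-point function, use it to improve the tail bound, and iterate until the argument closes. The other place requiring care is verifying that all the error terms in both recursions are genuinely negligible: the returning excursions and multi-block paths (absorbed into the geometric-series structure), the fact that $J$ is only comparable to $\|x\|^{-d-\alpha}$ rather than equal to it, the failure of the linearisation $1-(1-p)^m\approx mp$ when a sub-block carries an atypically large cluster, and the behaviour near the saturation scale on the subcritical side where $\chi_n$ is no longer small compared with $|\Lambda_n|$.
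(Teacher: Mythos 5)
Your overall architecture matches the paper's: a block recursion for the restricted susceptibility $\chi_n(\beta)=\E_\beta|K_n|$, an upper bound obtained by running the recursion at $\beta<\beta_c$ and invoking sharpness via a blow-up dichotomy, and a passage from restricted to unrestricted connection probabilities by a BK/first-exit decomposition. However, the step you yourself flag as the main obstacle — closing the lower recursion $\chi_{n+1}\gtrsim\beta L^{-\alpha n}\chi_n^2$, on which even your \emph{upper} bound depends — is a genuine gap, and your proposed fix cannot work. You propose to linearise $1-e^{-h(n+1)|K_n||C|}$ by truncating at $T\asymp L^{\alpha n}$ and to show $\E[|K_n|\wedge T]\gtrsim\E|K_n|$ from a tail bound $\P_{\beta_c}(|K|\geq k)\leq Ck^{-\eta}$ with $\eta>1-\alpha/d$. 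Such a bound is incompatible with the very theorem you are proving: since $L^{\alpha n}\asymp\E_{\beta_c}|K_n|\leq\sum_{k=1}^{L^{dn}}\P_{\beta_c}(|K|\geq k)\lesssim L^{dn(1-\eta)}$, any admissible polynomial tail exponent satisfies $\eta\leq 1-\alpha/d$ (and the conjectured value $1/\delta=(d-\alpha)/(d+\alpha)$ is strictly below $(d-\alpha)/d$). The underlying point is that at criticality the first moment $\E|K_n|$ is carried by clusters of size comparable to the typical maximum $M_n\asymp L^{(d+\alpha)n/2}\gg L^{\alpha n}$, so $\E[|K_n|\wedge L^{\alpha n}]$ genuinely is of strictly smaller order than $\E|K_n|$; no bootstrapping of polynomial tail bounds can rescue a truncation at that level.

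The paper's resolution is different and is the key new input. One truncates \emph{both} clusters symmetrically at $h(n+1)^{-1/2}\asymp L^{(d+\alpha)(n+1)/2}$ — so that $h|K_n||C|\leq 1$ on the truncated event — and proves $\E_\beta\bigl[|K_n|\wedge(\lambda L^{(d+\alpha)n/2})\bigr]\geq a\lambda\,\E_\beta|K_n|$ (\cref{lem:truncated_susceptibility}). This requires two ingredients absent from your sketch: the ``universal tightness'' theorem of \cite{hutchcroft2020power} (\cref{thm:universaltightness}), which gives an \emph{exponential} tail for $|K_n|$ above its typical maximum $M_n$, and a separate renormalization-plus-sharpness argument for the maximum cluster size itself yielding $M_n(\beta)\leq\frac{A}{\beta}L^{(d+\alpha)n/2}$ for all $\beta\leq\beta_c$ (\cref{prop:maximum_upper}); together these show the truncation at scale $L^{(d+\alpha)n/2}$ loses only a constant factor. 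Separately, for the lower bound the paper avoids your delicate subcritical saturation-scale analysis entirely: the Duminil-Copin--Tassion characterisation of $\beta_c$ gives $\phi_{\beta_c}(\Lambda_n)\geq 1$, and since the total edge weight leaving $\Lambda_n$ from any point is $O(\beta_c L^{-\alpha n})$ this yields $\E_{\beta_c}|K_n|\geq L^{\alpha n}$ in a few lines (\cref{prop:lower_bound}); one further application of the renormalization inequality (\cref{lem:susceptibility_renormalization}) at $\beta=\beta_c$ then gives the shell lower bound. You should restructure your argument around a maximum-cluster-size bound at scale $L^{(d+\alpha)n/2}$ rather than a polynomial tail bound at scale $L^{\alpha n}$.
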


Note that this theorem implies in particular that $\beta_c<\infty$ and that there are no infinite clusters at $\beta_c$, recovering the results of \cite{MR2955049,MR3035740}.
% \medskip
For radially symmetric $J$, the averaged two-point function estimate of \cref{thm:main} can immediately be upgraded to a pointwise estimate via symmetry, which we now state.
  Given two points $x,y\in \mathbb{H}^d_L$ we write 
\[\Lambda(x,y) = \left\{z \in \mathbb{H}^d_L : \|z-x\| \leq \|y-x\|\right\}=\left\{z \in \mathbb{H}^d_L : \|z-y\| \leq \|y-x\|\right\}\]
for the smallest ultrametric ball containing both $x$ and $y$.

\begin{corollary}
% [Pointwise bounds in the radially symmetric case]
\label{cor:mainsymmetric}
Let $J:\mathbb{H}^d_L \to [0,\infty)$ be a radially symmetric, integrable function, let $0<\alpha<d$, and suppose that there exist positive constants $c$ and $C$ such that $c \|x\|^{-d-\alpha} \leq J(x) \leq C\|x\|^{-d-\alpha}$ for every $x\in \mathbb{H}^d_L \setminus \{0\}$. Then there exist positive constants $a$ and $A$ depending only on $d$, $L$, $\alpha$, $c$, and $C$ such that
\[
a \langle x-y\rangle^{-d+\alpha} \leq 
\P_{\beta_c}(x \xleftrightarrow{\Lambda(x,y)} y) \leq
 \P_{\beta_c}(x \leftrightarrow y) \leq A \langle x-y\rangle^{-d+\alpha}
\]
for every $x,y\in \mathbb{H}^d_L$.
\end{corollary}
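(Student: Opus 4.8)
The plan is to deduce the pointwise estimate directly from the averaged estimate of \cref{thm:main}, using the distance-transitivity of $\bbH^d_L$ together with the radial symmetry of $J$ to reduce every relevant connection probability to a function of $\|x-y\|$ alone.

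First I would record the symmetry reduction. Since $J$ is radially symmetric it is invariant under every isometry of $\bbH^d_L$, and hence so is the law $\P_{\beta_c}$; since isometries map paths to paths and map the smallest ball $\Lambda(x,y)$ through $x$ and $y$ to the smallest ball through the images of $x$ and $y$, the events $\{x\leftrightarrow y\}$ and $\{x\xleftrightarrow{\Lambda(x,y)}y\}$ transform in the obvious way under isometries. Combining this with translation invariance and with distance-transitivity applied to the pairs $(0,z)$ and $(0,z')$ whenever $\|z\|=\|z'\|$, one obtains that for any distinct $x,y\in\bbH^d_L$ with $\|x-y\|=L^n$ — here $n\geq1$ automatically, since distinct points of $\bbH^d_L$ lie at distance at least $L$ — the quantities
\[
\P_{\beta_c}(x\leftrightarrow y)=\P_{\beta_c}(0\leftrightarrow z)
\qquad\text{and}\qquad
\P_{\beta_c}\bigl(x\xleftrightarrow{\Lambda(x,y)}y\bigr)=\P_{\beta_c}\bigl(0\xleftrightarrow{\Lambda_n}z\bigr)
\]
do not depend on the choice of $z\in\Lambda_n\setminus\Lambda_{n-1}$; here I use the elementary identity $\Lambda(0,z)=\Lambda_n$ whenever $\|z\|=L^n$, which is exactly what makes the $\Lambda_n$-restricted two-point function appearing in \cref{thm:main} coincide with the $\Lambda(x,y)$-restricted one in the corollary. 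In particular each summand in the sum over $\Lambda_n\setminus\Lambda_{n-1}$ in \cref{thm:main}, and likewise the summand $\P_{\beta_c}(0\leftrightarrow z)$ with $\|z\|=L^n$, equals a single fixed number.

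Next I would exploit that the sphere $\Lambda_n\setminus\Lambda_{n-1}$ carries a fixed positive fraction of the mass of $\Lambda_n$, namely $|\Lambda_n\setminus\Lambda_{n-1}|=(1-L^{-d})|\Lambda_n|\geq\tfrac12|\Lambda_n|$. For the upper bound, bound the full average in \cref{thm:main} from below by the contribution of the sphere:
\[
A\,L^{-(d-\alpha)n}\;\geq\;\frac{1}{|\Lambda_n|}\sum_{z\in\Lambda_n\setminus\Lambda_{n-1}}\P_{\beta_c}(0\leftrightarrow z)\;=\;(1-L^{-d})\,\P_{\beta_c}(0\leftrightarrow z_0),
\]
where $z_0$ is any point with $\|z_0\|=L^n$; since $L^{-(d-\alpha)n}=\langle x-y\rangle^{-d+\alpha}$, rearranging gives $\P_{\beta_c}(x\leftrightarrow y)\leq(1-L^{-d})^{-1}A\,\langle x-y\rangle^{-d+\alpha}$. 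For the lower bound, the left-hand inequality of \cref{thm:main} becomes $a\,L^{-(d-\alpha)n}\leq(1-L^{-d})\,\P_{\beta_c}(0\xleftrightarrow{\Lambda_n}z_0)$, whence $\P_{\beta_c}(x\xleftrightarrow{\Lambda(x,y)}y)\geq a\,\langle x-y\rangle^{-d+\alpha}$. The middle inequality $\P_{\beta_c}(x\xleftrightarrow{\Lambda(x,y)}y)\leq\P_{\beta_c}(x\leftrightarrow y)$ holds because $\{x\xleftrightarrow{\Lambda(x,y)}y\}\subseteq\{x\leftrightarrow y\}$, and the degenerate case $x=y$ is trivial since then both sides of the claimed bounds are constants of order $1$.

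I do not expect a serious obstacle here: all of the analytic content already resides in \cref{thm:main}, and what remains is bookkeeping. The only point deserving care is the verification that $\Lambda(0,z)=\Lambda_n$ precisely when $\|z\|=L^n$, so that the restricted two-point functions in the theorem and in the corollary genuinely agree, together with the (trivial but essential) fact that $|\Lambda_n\setminus\Lambda_{n-1}|/|\Lambda_n|$ stays bounded away from $0$ uniformly in $n$, which is what keeps the passage from an average to a single term within the up-to-constants regime.
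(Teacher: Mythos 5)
Your proposal is correct and follows essentially the same route as the paper's own proof: distance-transitivity reduces the pointwise (restricted and unrestricted) two-point functions to the spherical average over $\Lambda_n\setminus\Lambda_{n-1}$, which is then compared to the ball-normalized sums in \cref{thm:main} at the cost of the factor $(1-L^{-d})^{-1}=L^d/(L^d-1)$. The extra bookkeeping you include (the identification $\Lambda(0,z)=\Lambda_n$ for $\|z\|=L^n$ and the trivial case $x=y$) is accurate and consistent with what the paper leaves implicit.
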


\begin{proof}[Proof of \cref{cor:mainsymmetric} given \cref{thm:main}]
Since $\bbH^d_L$ is distance-transitive, $\P_\beta(0\xleftrightarrow{\Lambda_n} x)$ depends only on $n$ and $\|x\|$, so that if $\|x\|=L^n$ then we have by \cref{thm:main} that
\begin{align*}\P_{\beta_c}(0 \xleftrightarrow{\Lambda_n} x) &= \frac{1}{|\Lambda_n \setminus \Lambda_{n-1}|}\sum_{y\in \Lambda_n \setminus \Lambda_{n-1}} \P_{\beta_c}(0 \xleftrightarrow{\Lambda_n} y) \geq aL^{-(d-\alpha)n} \qquad \text{and}
\\
\P_{\beta_c}(0 \xleftrightarrow{} x) &= \frac{1}{|\Lambda_n \setminus \Lambda_{n-1}|}\sum_{y\in \Lambda_n \setminus \Lambda_{n-1}} \P_{\beta_c}(0 \xleftrightarrow{} y) \leq 
% \frac{|\Lambda_n|}{|\Lambda_n \setminus \Lambda_{n-1}|} A L^{-(d-\alpha)n} =
 \frac{L^d}{L^d-1} A L^{-(d-\alpha)n}\end{align*}
as claimed.
\end{proof}

\begin{remark}
% For small values of $\alpha$, the two-point function asymptotics given by \cref{thm:main,cor:mainsymmetric} are similar to those predicted to hold for long-range percolation on $\Z^d$. For the latter model, however, it is expected that 
For long-range percolation on $\Z^d$ with $d\geq 2$, it is predicted that the critical two-point function satisfies the same asymptotics given by \cref{cor:mainsymmetric} for $\alpha$ strictly smaller than the \emph{crossover value} $\alpha_c=\alpha_c(d)$, while for $\alpha>\alpha_c$ the critical two-point function should obey the same asymptotic decay as for \emph{nearest neighbour} percolation on $\Z^d$. See \cite[Section 1.3]{hutchcroft2020power} for details. Here we see that the behaviour on the hierarchical lattice is rather simpler, and indeed is more closely analogous to long-range percolation on the \emph{one-dimensional} lattice $\Z$.
This is related to the fact that $\bbH^d_L$ and $\bbH^1_{L^d}$ are related by a bijection that transforms distances by a $d$th root, so that long-range percolation on $\bbH^d_L$ with $J(x)=\langle x \rangle^{-d-\alpha}$ is equivalent to long-range percolation on $\bbH^1_{L^d}$ with $J(x)= \langle x \rangle^{-1-\alpha/d}$. As such, the dimension $d$ does not really make any difference to the model besides a change of parameterization. 
\end{remark}

\textbf{Further critical exponents.}
 It would be very interesting to compute further critical exponents of the model beyond those describing the two-point function. The next most accessible of these critical exponents is probably the exponent $\delta$, which is conjectured to describe the tail of the volume of a critical cluster via
\[
\delta = -\lim_{n\to\infty} \frac{\log n}{\log \P_{\beta_c}(|K|\geq n)} \quad \text{ so that} \quad \P_{\beta_c}(|K|\geq n) = n^{-1/\delta \pm o(1)} \quad \text{ as $n\to\infty$,}
\]
where $|K|$ denotes the cluster of the origin.
(It is a part of the conjecture that this limit is well-defined.) Using \cref{thm:main}, heuristic hyperscaling arguments (see e.g.\ \cite[Sections 1.3 and 2]{hutchcroft2020power}) predict that, under the hypotheses of \cref{thm:main},
% for long-range percolation on the hierarchical lattice $\bbH^d_L$ with $J(x)=\langle x \rangle^{-d-\alpha}$ one should have
\begin{equation}
\label{eq:delta_conjecture}
\delta = \begin{cases} 2 & \text{if } \; 0< \alpha \leq d/3\\
\frac{d+\alpha}{d-\alpha} & \text{if }\;d/3< \alpha < d.
\end{cases} 
\end{equation}
In particular, it is expected that the model should have \emph{mean-field critical behaviour} if and only if $\alpha<d/3$, with polylogarithmic corrections to this behaviour when $\alpha=d/3$.

While we have not been able to prove \eqref{eq:delta_conjecture}, \cref{thm:main} does yield some interesting partial progress on the problem.
% , including a complete analysis of the mean-field regime $\alpha<d/3$ under the assumption of radial symmetry.
 Indeed, when $\alpha<d/3$ we easily verify from \cref{thm:main} that the model satisfies the \emph{triangle condition} of Aizenman and Newman \cite{MR762034}, which is known to imply that many critical exponents exist and take their mean-field values \cite{MR762034,MR1127713,MR2551766,MR2748397}. (These proofs are usually written for Euclidean lattices but apply equally well in the hierarchical case.) Since the triangle condition and its consequences are well-known, we do not go into them in detail here but refer the reader to e.g.\ \cite[Chapter 10.3]{grimmett2010percolation} and \cite{heydenreich2015progress} for background.

\begin{corollary}
\label{cor:triangle}
Let $J:\mathbb{H}^d_L \to [0,\infty)$ be a radially symmetric, integrable function, let $0<\alpha<d$, and suppose that there exist constants $c$ and $C$ such that $c \|x\|^{-d-\alpha} \leq J(x) \leq C\|x\|^{-d-\alpha}$ for every $x\in \mathbb{H}^d_L \setminus \{0\}$. If $\alpha<d/3$ then the triangle condition
\[
\nabla_{\beta_c} := \sum_{x,y\in \bbH^d_L} \P_{\beta_c}(0 \leftrightarrow x)\P_{\beta_c}(x \leftrightarrow y)\P_{\beta_c}(y \leftrightarrow 0) < \infty
\]
holds and the model exhibits mean-field critical behaviour. In particular, we have that
\begin{align*}
\P_{\beta_c}(|K| \geq n) &\asymp n^{-1/2} && \text{ for every $n\geq 1$,}\\
\E_\beta |K| &\asymp (\beta_c-\beta)^{-1} && \text{ for every $0<\beta<\beta_c$, and }\\
\P_\beta(|K|=\infty) &\asymp \max\{\beta-\beta_c,1\} && \text{ for every $\beta>  \beta_c$},
\end{align*}
where $\asymp$ denotes an equality holding to within multiplication by positive constants.
\end{corollary}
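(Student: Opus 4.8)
The plan is to deduce the triangle condition directly from the pointwise two-point estimate of \cref{cor:mainsymmetric} by a routine geometric-series computation, and then to invoke the standard consequences of the triangle condition from the literature, remarking only that those proofs transfer to the hierarchical setting unchanged.

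\emph{Step 1: the triangle condition.} Write $\tau(u,v)=\P_{\beta_c}(u\leftrightarrow v)$ and $s=d-\alpha\in(0,d)$, so that \cref{cor:mainsymmetric} gives $\tau(u,v)\asymp\langle u-v\rangle^{-s}$ (with the convention $\tau(u,u)=1=\langle 0\rangle^{-s}$), and therefore
\[
\nabla_{\beta_c}=\sum_{x,y\in\bbH^d_L}\tau(0,x)\,\tau(x,y)\,\tau(y,0)\;\asymp\;\sum_{x,y\in\bbH^d_L}\langle x\rangle^{-s}\langle x-y\rangle^{-s}\langle y\rangle^{-s}.
\]
The two geometric inputs I would use are the volume growth $\#\{x:\|x\|=L^{j}\}=|\Lambda_j|-|\Lambda_{j-1}|\asymp L^{dj}$ and the ultrametric identity $\|x-y\|=\max(\|x\|,\|y\|)$ whenever $\|x\|\neq\|y\|$. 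First I would estimate the convolution $T(y):=\sum_{x}\langle x\rangle^{-s}\langle x-y\rangle^{-s}$ by splitting the sum over $x$ according to whether $\|x\|$ is larger than, equal to, or smaller than $\|y\|=L^m$: since $\alpha<d/3<d/2$ we have $d/2<s<d$, so each of the three pieces is a convergent geometric series of total size $O(L^{(d-2s)m})$, giving $T(y)\lesssim\langle y\rangle^{-(2s-d)}$ uniformly in $y$ (the piece with $\|x\|=\|y\|$ uses the bound $\sum_{z\in\Lambda_m}\langle z\rangle^{-s}\lesssim L^{(d-s)m}$). Substituting this back in,
\[
\nabla_{\beta_c}\;\lesssim\;\sum_{y\in\bbH^d_L}\langle y\rangle^{-s}\,T(y)\;\lesssim\;\sum_{y\in\bbH^d_L}\langle y\rangle^{-(3s-d)}\;\asymp\;\sum_{j\geq0}L^{(2d-3s)j},
\]
which is finite precisely when $3s>2d$, i.e.\ when $\alpha<d/3$. (Terms with $x=0$, $y=0$, or $x=y$ are absorbed into this bound via the Japanese-bracket convention, and in any case converge since $2s>d$.) This establishes $\nabla_{\beta_c}<\infty$.

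\emph{Step 2: consequences.} Once $\nabla_{\beta_c}<\infty$ is in hand, the three displayed asymptotics are exactly the classical consequences of the triangle condition: the upper bounds follow from the arguments of Aizenman--Newman \cite{MR762034} and Barsky--Aizenman \cite{MR1127713} and their refinements \cite{MR2551766,MR2748397} (see also \cite[Chapter 10.3]{grimmett2010percolation} and \cite{heydenreich2015progress}), while the matching lower bounds are the general mean-field bounds valid on any transitive graph, together with the trivial bound $\P_\beta(|K|=\infty)\le1$ in the regime where $\beta-\beta_c$ is bounded below. Since all of these arguments use only the Harris/FKG inequality, the BK inequality, Russo's formula, differential inequalities, and transitivity --- all available on $\bbH^d_L$ --- they apply verbatim, so I would simply state this rather than reproduce them.

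\emph{Main obstacle.} There is essentially no obstacle beyond bookkeeping: the mathematical content lies entirely in \cref{cor:mainsymmetric}, and given that estimate the triangle condition is a one-line convergence criterion. The only point requiring minor care is confirming that the cited high-dimensional percolation arguments do not secretly exploit a special feature of $\Z^d$ (amenability, a fixed degree, or Fourier analysis); this is folklore for the differential-inequality inputs but deserves an explicit sentence, since the whole point of \cref{cor:triangle} is to transport these conclusions to the hierarchical lattice.
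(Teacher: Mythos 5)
Your proposal is correct and follows essentially the same route as the paper: both deduce $\nabla_{\beta_c}<\infty$ by summing the pointwise bound $\P_{\beta_c}(x\leftrightarrow y)\lesssim\langle x-y\rangle^{-d+\alpha}$ of \cref{cor:mainsymmetric} and checking convergence when $\alpha<d/3$, and both defer the mean-field consequences to the standard literature on the triangle condition. The only cosmetic difference is bookkeeping: you estimate the convolution $T(y)$ directly via the ultrametric identity, whereas the paper telescopes $\nabla_{n+1}-\nabla_n$ over annuli $\Lambda_{n+1}\setminus\Lambda_n$ and reuses the averaged susceptibility bound $\sum_{x\in\Lambda_n}\P_{\beta_c}(0\leftrightarrow x)\lesssim L^{\alpha n}$; both give the same geometric series $\sum_n L^{(3\alpha-d)n}$.
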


We remark that, following the breakthrough work of Hara and Slade \cite{MR1043524}, the triangle condition has been verified for a number of high-dimensional Euclidean percolation models using a technique known as the \emph{lace expansion} \cite{MR1043524,MR2430773,MR3306002,MR1959796,MR782962,fitzner2015nearest}, which is surveyed in \cite{heydenreich2015progress} and \cite{MR2239599}. We expect that it should also be possible to prove a version of \cref{cor:triangle} using the lace expansion, but that this would be much more involved than the proof we have given and would also need slightly stronger hypotheses, for example that the constant $L$ is large.
% The proof of \cref{cor:triangle} is much simpler than arguments using the lace expansion.

\medskip

In the case $\alpha>d/3$, we are able to deduce a sharp \emph{lower bound} on the exponent $\delta$ from \cref{thm:main} together with the rigorous hyperscaling inequality of \cite[Theorem 2.1]{hutchcroft2020power}.
% , yielding that the model does \emph{not} have mean-field critical behaviour.

\begin{corollary}
\label{thm:delta_lower}
Let $J:\mathbb{H}^d_L \to [0,\infty)$ be a symmetric, integrable function, let $0<\alpha<d$, and suppose that there exist constants $c$ and $C$ such that $c \|x\|^{-d-\alpha} \leq J(x) \leq C\|x\|^{-d-\alpha}$ for every $x\in \mathbb{H}^d_L \setminus \{0\}$. If $\alpha>d/3$ and the exponent $\delta$ is well-defined then 
\[
\delta \geq \frac{d+\alpha}{d-\alpha}>2.
\]
Thus, the model does \emph{not} have mean-field critical behaviour when $\alpha>d/3$.
\end{corollary}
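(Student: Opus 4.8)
The plan is to combine the up-to-constants upper bound on the two-point function from \cref{thm:main} with the rigorous hyperscaling inequality of \cite[Theorem 2.1]{hutchcroft2020power}, which (in the notation of that paper) bounds the exponent $\delta$ from below in terms of the rate of decay of the two-point function. Roughly speaking, that hyperscaling inequality says that if the averaged critical two-point function over $\Lambda_n$ decays like $L^{-(d-\alpha)n}$ — equivalently, if the expected volume of $K \cap \Lambda_n$ grows like $L^{\alpha n}$ — then any valid value of $\delta$ must satisfy $\delta \ge (d+\alpha)/(d-\alpha)$. The first step is therefore to translate the conclusion of \cref{thm:main} into precisely the hypothesis required by \cite[Theorem 2.1]{hutchcroft2020power}: namely, to record that $\E_{\beta_c}|K\cap\Lambda_n| = \sum_{x\in\Lambda_n}\P_{\beta_c}(0\leftrightarrow x) \asymp |\Lambda_n| L^{-(d-\alpha)n} \asymp L^{\alpha n}$, using $|\Lambda_n| \asymp L^{dn}$. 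One must check that the lower bound direction of \cref{thm:main} also transfers, since a matching lower bound on $\E_{\beta_c}|K\cap\Lambda_n|$ is what actually powers the hyperscaling inequality (the restriction to $\Lambda_n$ in the connection event only helps here).

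Second, I would invoke \cite[Theorem 2.1]{hutchcroft2020power} directly, checking that its hypotheses — symmetry and integrability of $J$, continuity of the phase transition at $\beta_c$ (which follows from \cref{thm:main}, as noted in the remark immediately after it), and the volume growth estimate just derived — are all in place in the hierarchical setting. Here one should confirm that the hyperscaling inequality of \cite{hutchcroft2020power}, even if stated there for $\Z^d$, applies verbatim to $\bbH^d_L$; since the proof of that inequality is essentially graph-theoretic and uses only the volume-growth of balls together with BK/tree-graph-type estimates, this should go through with at most cosmetic changes, and I would remark as much. The output of this step is the bound $\delta \ge (d+\alpha)/(d-\alpha)$ whenever $\delta$ is well-defined.

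Third, the strict inequality $(d+\alpha)/(d-\alpha) > 2$ is immediate from $\alpha > d/3$: cross-multiplying, $(d+\alpha)/(d-\alpha)>2 \iff d+\alpha > 2(d-\alpha) \iff 3\alpha > d$. Since the mean-field value of $\delta$ is $2$, and the triangle condition would force $\delta = 2$, the bound $\delta > 2$ shows the triangle condition fails and the model is not mean-field; this last sentence is really just a remark rather than a separate argument.

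The main obstacle is entirely in the first two steps — specifically, verifying that the hypotheses of the external hyperscaling inequality \cite[Theorem 2.1]{hutchcroft2020power} are met on the hierarchical lattice and that the inequality's proof carries over. This is not a deep obstacle: it is bookkeeping plus a transfer-of-proof argument, and the key input (volume growth of balls, $r^d/L^d \le |B(0,r)| \le r^d$) is already recorded in the introduction. There is no genuinely hard new step; the substance of the corollary is the two-point function estimate \cref{thm:main} itself, which is assumed here, together with the pre-existing hyperscaling machinery.
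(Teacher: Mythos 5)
Your proposal is correct and follows essentially the same route as the paper: the paper also combines the lower bound of \cref{thm:main} (via the restricted susceptibility $\E_{\beta_c}|K_n|\geq L^{\alpha n}$ from \cref{prop:lower_bound}) with the hyperscaling inequality of \cite[Theorem 2.1]{hutchcroft2020power}, quoted here as \cref{thm:hyperscalingsimple} directly for $\bbH^d_L$ so no transfer argument is needed, and runs the comparison as a contradiction by choosing $\theta$ with $(d-\alpha)/(d+\alpha)<\theta<(1/\delta)\wedge(1/2)$. Your observation that it is the \emph{lower} bound direction of \cref{thm:main} that powers the argument is exactly the point.
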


\begin{remark}
We are not aware of any conjectured values of the critical exponents $\beta$, $\gamma$, and $\Delta$ describing \emph{near critical} percolation on the hierarchical lattice when $\alpha>d/3$. (See e.g.\ \cite[Chapter 9]{grimmett2010percolation} for definitions of these exponents.) It would be very interesting even to have even heuristic calculations of these exponents in this regime.
\end{remark}

\section{Proof}

\subsection{The maximum cluster size}
\label{sec:maximum}

Fix $d\geq 1$, $L\geq 2$ and $0<\alpha<d$. 
For each $n\geq 1$ and $x\in \bbH^d_L$ let $K_n(x)$ be the cluster of $x$ in the ultrametric ball $\Lambda_n(x)$ (i.e., the set of $y\in \Lambda_n(x)$ connected to $x$ by an open path contained in $\Lambda_n(x)$) and write $K_n=K_n(0)$ to lighten notation. We will also write $K(x)$ for the cluster of $x$ in $\bbH^d_L$ and write $K=K(0)$. In this section, we study the distribution of the size of the \emph{largest} cluster in an ultrametric ball
\[
|K^\mathrm{max}_n|:= \max\{ |K_n(x)| : x \in \Lambda_n\}.
\]
(Note that this is a slight abuse of notation since the largest cluster might not be unique, in which case $K^\mathrm{max}_n$ is not well-defined as a set.) Following \cite[Section 2]{hutchcroft2020power}, we define the \textbf{typical value} of $|K^\mathrm{max}_n|$ to be
\[
M_n=M_n(\beta):= \min\Bigl\{m \geq 1 : \P_\beta\left(|K^\mathrm{max}_n| \geq m\right) \leq \frac{1}{e}\Bigr\} \]
for each $\beta \geq 0$. Note that $M_n(\beta)$ is an increasing, continuous function of $\beta$ for each $n\geq 0$. Note also that $M_n \geq 2$, so that
\begin{equation}
\label{eq:atleasttypical}
\P_\beta\left(|K^\mathrm{max}_n| \geq \frac{1}{2}M_n\right) \geq \P_\beta\left(|K^\mathrm{max}_n| \geq M_n-1\right) \geq \frac{1}{e}
\end{equation}
and hence by symmetry that
\begin{equation}
\label{eq:susceptibility>max}
\E_\beta |K_n| = \frac{1}{|\Lambda_n|}\sum_{x\in \Lambda_n}\E_\beta |K_n(x)| \geq  \frac{M_n^2}{4|\Lambda_n|}\P_\beta\left(|K^\mathrm{max}_n| \geq \frac{1}{2}M_n\right) \geq \frac{M_n^2}{4e L^{dn}} 
\end{equation}
for every $\beta\geq 0$ and $n\geq 0$.

\medskip

The following theorem is a special case of \cite[Theorem 2.2]{hutchcroft2020power} and shows in particular that the maximum cluster size $|K_n^{\mathrm{max}}|$ is of the same order as its typical value $M_n$ with high probability. 
% Note that this theorem is extremely general, and in particular does not require any assumptions about the decay of the edge probability kernel $J$.

\begin{theorem}
\label{thm:universaltightness}
% Consider long-range percolation on the hierarchical lattice $\bbH^d_L$ for some $d\geq 1$ and $L \geq 2$. 
Let $J:\mathbb{H}^d_L \to [0,\infty)$ be a symmetric, integrable function, let $\beta \geq 0$, and let $M_n=M_n(\beta)$ for each $n\geq 0$.
 The inequalities
\begin{equation}
\P_\beta\Bigl(|K_n^\mathrm{max}| \geq \lambda M_n\Bigr) \leq \exp\left(-\frac{1}{9}\lambda \right)
\label{eq:BigClusterUnrooted}
\qquad \text{and} \qquad \P_\beta\Bigl(|K_n^\mathrm{max}| < \eps M_n \Bigr) \leq 27 \eps 
% \label{eq:SmallMaximum}
\end{equation}
hold for every $n\geq 0$, $\lambda \geq 1$ and $0<\eps \leq 1$. Moreover, the inequality
\begin{equation}
\label{eq:BigClusterRooted}
\P_\beta\Bigl(|K_n| \geq \lambda M_n\Bigr) \leq  \P_\beta\Bigl(|K_n| \geq  M_n\Bigr) \exp\left(1-\frac{1}{9}\lambda \right)
\end{equation}
holds for every $n\geq 0$ and $\lambda \geq 1$.
\end{theorem}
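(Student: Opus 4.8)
The plan is to follow the proof of \cite[Theorem 2.2]{hutchcroft2020power}; I sketch the main ideas. The two upper-tail estimates in \eqref{eq:BigClusterUnrooted} and \eqref{eq:BigClusterRooted} follow from the van den Berg--Kesten (BK) inequality (see e.g.\ \cite[Section 2.3]{grimmett2010percolation}) together with a deterministic decomposition of trees into vertex-disjoint subtrees of controlled size, while the lower-tail estimate uses the self-similar structure of $\bbH^d_L$ together with \eqref{eq:atleasttypical}.

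For the unrooted upper tail, suppose that $|K_n^\mathrm{max}|\geq \lambda M_n$ and let $\mathscr C$ be a largest cluster of $\Lambda_n$; fix a spanning tree $T$ of $\mathscr C$ inside the open subgraph, so that $|V(T)|\geq\lambda M_n$. An elementary peeling argument shows that every finite tree contains at least $\lfloor(|V(T)|-m)/(2m)\rfloor+1$ pairwise vertex-disjoint subtrees each of size in $[m,2m-1]$: root $T$ arbitrarily, repeatedly take a deepest vertex $v$ with $|\mathrm{subtree}(v)|\geq m$, remove either $\mathrm{subtree}(v)$ itself (when it has fewer than $2m$ vertices) or just enough of the subtrees hanging below the children of $v$ to form a piece of size in $[m,2m)$, and continue on what remains as long as it has at least $m$ vertices. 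Taking $m=M_n$ yields $P\geq\lceil(\lambda-1)/2\rceil$ vertex-disjoint open connected subgraphs of $\Lambda_n$, each of size at least $M_n$; since these have pairwise disjoint edge sets, they witness a $P$-fold disjoint occurrence of the increasing event $\{|K_n^\mathrm{max}|\geq M_n\}$, whose probability is at most $e^{-1}$ by the definition of $M_n$. The BK inequality then gives $\P_\beta(|K_n^\mathrm{max}|\geq\lambda M_n)\leq e^{-P}\leq e^{-\lceil(\lambda-1)/2\rceil}$, which upgrades to the claimed $\exp(-\lambda/9)$ after a short case check (treating $\lambda\in[1,9/7)$ directly, where $P\geq1$, and using $\lceil(\lambda-1)/2\rceil\geq\lambda/9$ for $\lambda\geq 9/7$). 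For the rooted bound \eqref{eq:BigClusterRooted} one performs the same peeling on a spanning tree of $K_n$ rooted at the origin, this time peeling $\lceil(\lambda-1)/2\rceil-1$ pieces avoiding a neighbourhood of $0$; since the leftover component containing $0$ then still has more than $M_n$ vertices, $\{|K_n|\geq M_n\}$ occurs disjointly with $\lceil(\lambda-1)/2\rceil-1$ copies of $\{|K_n^\mathrm{max}|\geq M_n\}$, and BK gives $\P_\beta(|K_n|\geq\lambda M_n)\leq\P_\beta(|K_n|\geq M_n)\,e^{1-\lceil(\lambda-1)/2\rceil}\leq\P_\beta(|K_n|\geq M_n)\,e^{1-\lambda/9}$.

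For the lower tail, the starting point is that $\Lambda_n$ contains $L^{dk}$ disjoint isometric copies of $\Lambda_{n-k}$ carrying independent configurations, and that if the largest cluster of $\Lambda_n$ has fewer than $m$ vertices then so does the largest cluster of each copy; hence
\[
\P_\beta\bigl(|K_n^\mathrm{max}|<m\bigr)\leq\P_\beta\bigl(|K_{n-k}^\mathrm{max}|<m\bigr)^{L^{dk}}\qquad(0\leq k\leq n).
\]
We apply this with $m=\eps M_n$ (the bound being vacuous for $\eps\geq 1/27$ and the event empty unless $\eps M_n>1/2$) and with $k$ taken as large as possible subject to $2\eps M_n\leq M_{n-k}$; for such $k$, \eqref{eq:atleasttypical} gives $\P_\beta(|K_{n-k}^\mathrm{max}|<\eps M_n)\leq\P_\beta(|K_{n-k}^\mathrm{max}|<\tfrac12 M_{n-k})\leq 1-e^{-1}$, so $\P_\beta(|K_n^\mathrm{max}|<\eps M_n)\leq(1-e^{-1})^{L^{dk}}$. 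Since $(1-e^{-1})^{t}\leq 27\eps$ once $t\gtrsim\log(1/\eps)$, it remains to show that this choice of $k$ forces $L^{dk}\gtrsim\log(1/\eps)$.

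This last point is the main obstacle, and is where one needs genuine information about the model beyond the formal recursion: in regimes where $M_n$ is much larger than $M_{n-1}$ (which can really happen, when the scale-$L^n$ edges are themselves responsible for creating the large clusters), the threshold $\eps M_n$ can lie between $M_{n-1}$ and $M_n$ for a wide range of $\eps$, and one must quantify how many levels $k$ it takes for $M_{n-k}$ to fall below $2\eps M_n$ and verify that $L^{dk}$ is correspondingly large. Controlling this interplay between the growth of the sequence $(M_j)_j$ and the recursion above — using the hierarchical decomposition together with the tail bound \eqref{eq:BigClusterUnrooted} already proven — is the technical heart of the argument, for which I refer to \cite[Section 2]{hutchcroft2020power}.
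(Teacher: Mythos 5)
First, note that the paper does not actually prove this theorem: it is quoted as a special case of Theorem~2.2 of \cite{hutchcroft2020power}, so the only meaningful comparison is with the proof given there. Your sketch of the two upper-tail bounds does follow that proof (spanning-tree surgery plus the BK inequality), but the deterministic decomposition lemma you state is false as written: a star with $3m$ vertices contains no two \emph{vertex-disjoint} connected subgraphs of size at least $m\ge 2$, since any connected subgraph with at least one edge must contain the centre, yet your formula promises two such pieces. Relatedly, ``just enough of the subtrees hanging below the children of $v$'' is not connected unless $v$ itself is included, and once $v$ is included in a piece it must be reused by the next one. The pieces produced by the peeling are therefore only \emph{edge-disjoint}; fortunately edge-disjoint witnesses are exactly what the BK inequality needs (and you do invoke disjointness of edge sets when applying it), and the edge-disjoint version of the decomposition lemma is true, so this part is a repairable slip rather than a gap.

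The lower-tail bound, by contrast, is a genuine gap, and not merely because you defer its key step back to the reference. Your recursion $\P_\beta(|K_n^{\mathrm{max}}|<m)\le\P_\beta(|K_{n-k}^{\mathrm{max}}|<m)^{L^{dk}}$ is correct but cannot be closed under the stated hypotheses: the theorem assumes only that $J$ is symmetric and integrable, so $J$ may vanish (or be negligible) below scale $L^n$, in which case $M_{n-k}=2$ for all $k\ge 1$ and every factor on the right-hand side equals $1$ as soon as $\eps M_n\ge 2$; no choice of $k$ can then produce a nontrivial bound. Consistently with this, the theorem in \cite{hutchcroft2020power} is proved for arbitrary finite vertex sets in arbitrary weighted graphs, so its proof cannot rely on the nested structure of $\bbH^d_L$ at all. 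The actual argument is the mirror image of your upper-tail argument: on the event $\{|K_n^{\mathrm{max}}|\ge M_n-1\}$, which has probability strictly greater than $1/e$ by the minimality in the definition of $M_n$, one decomposes a spanning tree of a largest cluster into $k\gtrsim 1/\eps$ edge-disjoint subtrees each with at least $\lceil\eps M_n\rceil$ vertices, so that this event implies the $k$-fold disjoint occurrence of the increasing event $\{|K_n^{\mathrm{max}}|\ge\eps M_n\}$; the BK inequality then gives $e^{-1}<\P_\beta(|K_n^{\mathrm{max}}|\ge\eps M_n)^k$, hence $\P_\beta(|K_n^{\mathrm{max}}|<\eps M_n)\le 1-e^{-1/k}\le 1/k\le 27\eps$ after tracking the integer-part losses. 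This uses no structure of the hierarchical lattice and is the step your proposal is missing.
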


The main goal of this section is to prove an upper bound on the growth of $M_n(\beta_c)$. \cref{thm:universaltightness}, and specifically the inequality \eqref{eq:BigClusterRooted}, will allow us to make use of these asymptotics to study the growth of $\E_{\beta_c} |K_n|$ in the next subsection.

\begin{prop}
\label{prop:maximum_upper}
Let $J:\mathbb{H}^d_L \to [0,\infty)$ be a symmetric, integrable function, let $0<\alpha<d$, and suppose that there exists a positive constant $c$ such that $J(x) \geq c\|x\|^{-d-\alpha}$ for every $x\in \mathbb{H}^d_L \setminus \{0\}$.
Then there exists a constant $A=A(d,L,\alpha,c)$ such that
\[
M_n(\beta) \leq \frac{A}{\beta} L^{(d+\alpha)n/2}
\]
for every $0\leq \beta \leq \beta_c$ and $n\geq 0$.
\end{prop}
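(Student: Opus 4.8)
The plan is to bound $M_n(\beta)$ by a recursive argument that exploits the hierarchical nesting $\Lambda_n = \bigsqcup_{j} \Lambda_{n-1}(x_j)$, where the union runs over the $L^d$ cosets of $\Lambda_{n-1}$ inside $\Lambda_n$. The key identity is that any cluster $K_n(x)$ in $\Lambda_n$ is built by taking the clusters $K_{n-1}(\cdot)$ inside the sub-balls and gluing them along the edges that cross between sub-balls. An edge between a point in sub-ball $i$ and a point in sub-ball $j$ ($i\neq j$) has the same inclusion probability $1-e^{-\beta J}$ with $\|x-y\|=L^n$, hence probability at most $C'\beta L^{-(d+\alpha)n}$ for a suitable constant, and there are at most $|\Lambda_{n-1}|^2 = L^{2d(n-1)}$ such pairs between any two fixed sub-balls. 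So in expectation the number of cross-edges between two given sub-balls is at most of order $\beta L^{-(d+\alpha)n} L^{2dn} = \beta L^{(d-\alpha)n}$; crucially, since $\alpha<d$ and (for the lower range of $n$ relative to $\beta$) this can be made small, most pairs of sub-balls are \emph{not} directly joined.

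The main step is a supermartingale/first-moment estimate on $|K_n^{\max}|$ in terms of $|K_{n-1}^{\max}|$. Condition on the percolation configuration inside all $L^d$ sub-balls. Form the auxiliary graph $H$ on the $L^d$ sub-balls where we place an edge between sub-ball $i$ and sub-ball $j$ if there is at least one open cross-edge between them; then any cluster in $\Lambda_n$ is contained in a union of sub-ball clusters indexed by a connected component of $H$, so $|K_n^{\max}| \le |\mathcal{C}_H^{\max}| \cdot \max_i |K_{n-1}^{\max}(x_i)|$ is far too lossy — instead I would track $\E_\beta|K_n|$ directly. Using $\E_\beta|K_n| = \frac{1}{|\Lambda_n|}\sum_{x\in\Lambda_n}\E_\beta|K_n(x)|$ and the BK/tree-graph style bound, one gets a recursion of the shape
\[
\E_\beta |K_n| \;\le\; \E_\beta|K_{n-1}| + (\text{const})\,\beta\, L^{(d-\alpha)n}\, \bigl(\E_\beta|K_{n-1}|\bigr)^2 / |\Lambda_{n-1}|,
\]
coming from: reach $x$'s sub-ball cluster, then cross one long edge (cost $\beta L^{-(d+\alpha)n}$ per pair, $|\Lambda_{n-1}|$ choices of endpoint in each of two sub-balls), then a sub-ball cluster at the other end. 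Combined with $\E_\beta|K_n| \ge M_n^2/(4eL^{dn})$ from \eqref{eq:susceptibility>max} and with \eqref{eq:BigClusterRooted} (which controls the tail of $|K_n|$ by $M_n$ so that second-moment-type quantities are comparable to $M_n^2$), and using $\beta\le\beta_c$ together with the fact that at $\beta_c$ we cannot have $\E_\beta|K_n|$ growing super-exponentially (more precisely, using a bootstrap: if $M_n \le A\beta^{-1} L^{(d+\alpha)n/2}$ held up to $n-1$ then $\E_\beta|K_{n-1}| \lesssim \beta^{-2} L^{\alpha(n-1)}$, feeding this back makes the correction term in the recursion comparable to the main term, which is exactly the self-consistent size), I would close an induction on $n$.

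The delicate point — and the main obstacle — is ruling out a faster-than-$L^{(d+\alpha)n/2}$ growth of $M_n$ at $\beta_c$; the recursion above is only useful if the quadratic correction term does not dominate, which requires knowing a priori that $\E_\beta|K_{n-1}|$ is not already too large. This is where $\beta\le\beta_c$ must be used in an essential (non-perturbative) way: one argues that if $M_{n}(\beta_c)$ were ever to exceed $A\beta_c^{-1}L^{(d+\alpha)n/2}$, then by the continuity/monotonicity of $M_n(\beta)$ and a rescaling comparing $\Lambda_n$ to a single vertex of $\bbH^d_L$ at the next scale, one could run the recursion to produce an infinite cluster at $\beta_c$ (or at some $\beta<\beta_c$), contradicting the definition of $\beta_c$ — essentially a hierarchical renormalization-group fixed-point argument, where $L^{(d+\alpha)n/2}$ is identified as the critical scaling by requiring the map $n\mapsto n+1$ to be neither contracting nor expanding. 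I would set this up as: define $\lambda_n = M_n(\beta_c) \beta_c L^{-(d+\alpha)n/2}$, show $\lambda_n$ satisfies $\lambda_{n} \le \lambda_{n-1}(1 + \text{const}\cdot\lambda_{n-1})$-type control fed by the above recursion, note $\lambda_0$ is bounded, and conclude $\sup_n \lambda_n < \infty$ provided the constants are arranged so the fixed point is stable — with the supercriticality obstruction supplying the needed stability. The remaining steps (the explicit BK bound giving the recursion, the comparison of $\E|K_n|$ with $M_n^2/L^{dn}$ via \cref{thm:universaltightness}, and the base case) are routine.
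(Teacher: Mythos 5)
Your high-level picture (a cross-scale renormalization with $L^{(d+\alpha)n/2}$ as the self-consistent scaling, closed by a contradiction that uses $\beta\le\beta_c$) matches the paper, but the concrete mechanism you propose does not work, and it misses the key idea. Note first that the proposition assumes only a \emph{lower} bound $J(x)\ge c\|x\|^{-d-\alpha}$, whereas your estimates on cross-edges ("probability at most $C'\beta L^{-(d+\alpha)n}$") use an upper bound on $J$ that is not available; this is a sign the argument must run through lower bounds on merging probabilities, not upper bounds on cluster growth. The paper proves a lower-bound renormalization implication (\cref{lem:maximum_renormalization}): if $M_n(\beta)^2\ge \frac{A}{\beta}L^{(d+\alpha)n}$, then among the $L^{d\ell}$ copies of $\Lambda_n$ inside $\Lambda_{n+\ell}$ a constant fraction contain clusters of size at least $\frac12 M_n$, any two such clusters are joined by a single open long edge with conditional probability at least $1-\exp\bigl[-\frac{c\beta}{4}M_n^2L^{-(d+\alpha)(n+\ell)}\bigr]$ (close to $1$ exactly under the hypothesis on $M_n$ — this is where $J\ge c\|x\|^{-d-\alpha}$ enters), and a union bound over the $O(L^{2d\ell})$ pairs merges them into one cluster of size at least $\frac18L^{d\ell}M_n\ge L^{(d+\alpha)\ell/2}M_n$, so the hypothesis propagates to scale $n+\ell$. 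The upper bound on $M_n$ then follows by contradiction: for $\beta<\beta_c$, sharpness of the phase transition (\cref{thm:sharpness}) gives $\E_\beta|K|<\infty$, so by \eqref{eq:susceptibility>max} we have $M_n(\beta)^2\le 4eL^{dn}\E_\beta|K_n|=O(L^{dn})$, incompatible with $M_{n+i\ell}^2\gtrsim L^{(d+\alpha)(n+i\ell)}$ for all $i$ since $d+\alpha>d$; one concludes for all $\beta<\beta_c$ and lets $\beta\uparrow\beta_c$ by continuity of $M_n(\beta)$.

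Your proposed closing step is the genuine gap. A bound $\lambda_n\le\lambda_{n-1}(1+c\,\lambda_{n-1})$ with $\lambda_0>0$ never yields $\sup_n\lambda_n<\infty$: such a sequence is nondecreasing, so $\lambda_n\le\lambda_0\prod_{k<n}(1+c\lambda_k)$ is bounded only if $\sum_k\lambda_k<\infty$, which fails; there is no stable fixed point to arrange, and "the supercriticality obstruction supplying the needed stability" is precisely the step that remains unproved. Moreover, the contradiction you invoke — that excessive growth of $M_n$ would "produce an infinite cluster, contradicting the definition of $\beta_c$" — is not what the renormalization yields: it yields unbounded restricted susceptibility ($M_n^2/L^{dn}\to\infty$ forces $\E_\beta|K_n|\to\infty$), and ruling this out below $\beta_c$ requires the Menshikov/Aizenman--Barsky sharpness theorem as a genuine external input, not the definition of $\beta_c$. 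Working directly at $\beta_c$ (as in your definition of $\lambda_n$) forfeits this input, since the susceptibility is actually infinite at $\beta_c$ here; one must argue strictly below $\beta_c$ and pass to the limit. Finally, your upper-bound BK recursion for $\E_\beta|K_n|$ in terms of $\E_\beta|K_{n-1}|$ does not close in one step (the connection event on the far side of a cross-edge is still $\Lambda_n$-restricted), though this is secondary to the issues above.
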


Since $M_n(\beta) \geq 2$ for every $n\geq 0$ and $\beta \geq 0$, \cref{prop:maximum_upper} has the following immediate corollary. (Indeed, taking $n=0$ gives that $\beta_c \leq A/2$ where $A$ is the constant from \cref{prop:maximum_upper}.)

\begin{corollary}
\label{cor:betabound} 
Let $J:\mathbb{H}^d_L \to [0,\infty)$ be a symmetric, integrable function, let $0<\alpha<d$, and suppose that there exists a positive constant $c$ such that $J(x) \geq c\|x\|^{-d-\alpha}$ for every $x\in \mathbb{H}^d_L \setminus \{0\}$. Then there exists a constant $\beta_1=\beta_1(d,L,\alpha,c)$ such that $\beta_c \leq \beta_1$.
\end{corollary}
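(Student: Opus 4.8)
The plan is to establish the following renormalisation criterion and then bootstrap it: \emph{there is a constant $q_0=q_0(d,L,\alpha,c)$ such that if $\beta M_n(\beta)^2 \geq q_0 L^{(d+\alpha)n}$ for some $n\geq 0$, then $\P_\beta(0\leftrightarrow\infty)>0$} (so that $\beta\geq\beta_c$). Granting this, \cref{prop:maximum_upper} follows by a short deduction. Applying the criterion with $n=0$ (where $M_0=2$) shows that $4\beta\geq q_0$ forces $\beta\geq\beta_c$, whence $\beta_c\leq q_0/4$. For $\beta<\beta_c$, the contrapositive of the criterion gives $M_n(\beta)^2< q_0\beta^{-1}L^{(d+\alpha)n}$ for every $n$, and since $\beta<\beta_c\leq q_0/4$ one has $\sqrt{q_0/\beta}\leq (q_0/2)/\beta$, so $M_n(\beta)\leq (q_0/2)\beta^{-1}L^{(d+\alpha)n/2}$; thus $A:=q_0/2$ works for every $\beta<\beta_c$. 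Finally the endpoint $\beta=\beta_c$ follows by continuity: for fixed $n$ and $m$ the quantity $\P_\beta(|K_n^{\mathrm{max}}|\geq m)$ depends on only finitely many edges and hence is continuous in $\beta$, from which $M_n$ is left-continuous in $\beta$, so $M_n(\beta_c)=\lim_{\beta\uparrow\beta_c}M_n(\beta)\leq \lim_{\beta\uparrow\beta_c}(q_0/2)\beta^{-1}L^{(d+\alpha)n/2}$.

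To prove the criterion I would renormalise the hierarchical structure. Regard $\bbH^d_L$ as built from translates of $\Lambda_n$; the quotient $\bbH^d_L/\Lambda_n$ is again a copy of $\bbH^d_L$. Fix a small $\epsilon>0$. By the second inequality in \eqref{eq:BigClusterUnrooted}, each translate of $\Lambda_n$ contains a cluster of size at least $\epsilon M_n$ with probability at least $1-27\epsilon$, and since distinct translates involve disjoint sets of edges these events are independent; call a translate \emph{good} when this happens and fix (by any measurable rule) such a cluster $C$ inside it. If two good translates lie at distance $L^{n+k}$ in $\bbH^d_L$, then every pair $x\in C$, $y\in C'$ has $\|x-y\|=L^{n+k}$ exactly, so there are at least $\epsilon^2 M_n^2$ potential edges between $C$ and $C'$, each open with probability at least $1-e^{-\beta c L^{-(d+\alpha)(n+k)}}$; hence $C$ and $C'$ are directly joined with probability at least $1-\exp(-\tfrac12 c\epsilon^2\beta M_n^2 L^{-(d+\alpha)(n+k)})$, and crucially these joining events are conditionally independent across pairs of translates given the configurations inside the translates, since they use disjoint edge sets. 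Writing $Q:=\tfrac12 c\epsilon^2\beta M_n^2 L^{-(d+\alpha)n}$, it follows that the process on $\bbH^d_L/\Lambda_n\cong\bbH^d_L$ in which a site is open when the corresponding translate is good and two open sites at distance $L^k$ are joined when the corresponding selected clusters are joined stochastically dominates an \emph{independent} long-range percolation with site density $\geq 1-27\epsilon$ in which open sites at distance $L^k$ are joined with probability $\geq 1-e^{-QL^{-(d+\alpha)k}}$; an infinite cluster in this dominated process produces an infinite cluster in the original model.

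It remains to show that this independent renormalised process percolates once $Q$ exceeds a large constant, which I would do by iterating the same construction. When $Q$ is large, any two open sites in a box of ultrametric side $s\asymp Q^{1/(2(d+\alpha))}$ are joined with probability $\geq 1-e^{-\sqrt Q}$, so such a box contains, with probability $1-\delta$ for a tiny $\delta$, a `giant' cluster through a $(1-27\epsilon-o(1))$-fraction of its sites. Passing to boxes of side $s$ as the sites of a new copy of $\bbH^d_L$, the same computation shows that the giants of two good boxes at new-distance $L^m$ are joined with probability $\geq 1-e^{-Q' L^{-(d+\alpha)m}}$ where $Q'\asymp Q^{(3d+\alpha)/(2(d+\alpha))}$; since $\alpha<d$ the exponent $(3d+\alpha)/(2(d+\alpha))$ is strictly greater than $1$, so iterating produces a sequence of effective parameters increasing super-exponentially while the densities of good boxes stay bounded away from zero and the failure probabilities at successive stages are summable. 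A Borel--Cantelli argument along a fixed nested sequence of boxes then produces an infinite cluster almost surely, and hence $\P_\beta(0\leftrightarrow\infty)>0$ by distance-transitivity. I expect this iterated renormalisation to be the main obstacle: one must control all the constants uniformly through infinitely many stages, so that the failure probabilities are genuinely summable and the threshold on $Q$ (hence $q_0$) depends only on $d,L,\alpha,c$, and one must make the comparison with an honest independent percolation at each stage fully rigorous.
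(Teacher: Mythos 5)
Your route is genuinely different from the paper's, and for the corollary itself it is considerably heavier than what is needed. The paper obtains \cref{cor:betabound} in one line from \cref{prop:maximum_upper}: taking $n=0$ and using $M_0(\beta)\geq 2$ gives $2\leq A/\beta$ for all $\beta\leq\beta_c$, hence $\beta_c\leq A/2$. \cref{prop:maximum_upper} is in turn proved by combining the one-step renormalization inequality of \cref{lem:maximum_renormalization} with the sharpness of the phase transition (\cref{thm:sharpness}): if $M_n(\beta)^2\geq\frac{A}{\beta}L^{(d+\alpha)n}$ held at some scale for some $\beta<\beta_c$, it would propagate to all scales $n+i\ell$ and force $L^{-dm}M_m(\beta)^2\to\infty$, contradicting $\E_\beta|K|<\infty$. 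The crucial point is that the paper never constructs an infinite cluster; the contradiction is outsourced to the (deep, but quoted) sharpness theorem. You instead propose a finite-size criterion for \emph{supercriticality} and prove it by a full multi-scale renormalization. This buys independence from the sharpness theorem, but at the cost of the hardest part of the argument: the iterated construction of an infinite cluster with uniformly controlled constants and summable failure probabilities. Note also that for \cref{cor:betabound} alone only the $n=0$ case of your criterion is used, where $M_0=2$ and every translate is trivially ``good,'' so your argument degenerates to showing that sufficiently large $\beta$ percolates --- i.e.\ to re-proving $\beta_c<\infty$ in essentially the manner of Koval--Meester--Trapman and Dawson--Gorostiza, rather than exploiting the paper's mechanism.

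On correctness: the first-stage domination (independent site variables from \eqref{eq:BigClusterUnrooted}, conditionally independent joining events on disjoint edge sets, all cross-distances equal to $L^{n+k}$ by the ultrametric property) is sound, and your exponent $(3d+\alpha)/(2(d+\alpha))>1$ for the growth of the effective parameter is computed correctly and does use $\alpha<d$ in the right place. But the crux --- carrying the site densities, the giant-cluster fractions, and the comparison with an honest independent site-bond model through infinitely many stages, and then extracting an infinite cluster --- is only outlined, as you yourself acknowledge. As written this is a viable plan rather than a complete proof; if you want the short proof, prove \cref{lem:maximum_renormalization} and let \cref{thm:sharpness} do the work of ruling out unbounded growth below $\beta_c$.
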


We will deduce \cref{prop:maximum_upper} from the sharpness of the phase transition together with the following renormalization lemma.

\begin{lemma}[Renormalization of the maximum cluster size]
\label{lem:maximum_renormalization}
Let $J:\mathbb{H}^d_L \to [0,\infty)$ be a symmetric, integrable function, let $0<\alpha<d$, and suppose that there exists a positive constant $c$ such that $J(x) \geq c\|x\|^{-d-\alpha}$ for every $x\in \mathbb{H}^d_L\setminus\{0\}$.
There exist constants $\ell=\ell(d,L,\alpha,c)$ and $A=A(d,L,\alpha,c)$ such that the implication
\[\left(M_n(\beta)^2 \geq \frac{A}{\beta} L^{(d+\alpha)n}\right) \Rightarrow \left(M_{n+\ell}(\beta)^2 \geq \frac{A}{\beta} L^{(d+\alpha)(n+\ell)} \right)\]
holds for every $\beta > 0$ and $n\geq 0$.
\end{lemma}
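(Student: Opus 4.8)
\emph{Proof plan.} The idea is that $\Lambda_{n+\ell}$ is the disjoint union of its $L^{d\ell}$ level-$n$ sub-balls (each a translate of $\Lambda_n$), that by the lower-tail bound of \cref{thm:universaltightness} a constant fraction of these sub-balls each contain a cluster of size at least a constant times $M_n$, and that the long edges between distinct sub-balls are numerous enough to merge almost all of these large clusters into a single cluster of $\Lambda_{n+\ell}$. I would first record that, because $\bbH^d_L$ is an ultrametric space, $\Lambda_n(x)=\Lambda_n$ for every $x\in\Lambda_n$, so $|K^{\mathrm{max}}_n|$ is exactly the size of the largest cluster of percolation restricted to $\Lambda_n$; consequently the corresponding maxima for the $L^{d\ell}$ sub-balls of $\Lambda_{n+\ell}$ are i.i.d.\ copies of $|K^{\mathrm{max}}_n|$. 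Split the edge set of $\Lambda_{n+\ell}$ into the \emph{intra}-edges (both endpoints in a common sub-ball) and the \emph{inter}-edges (endpoints in distinct sub-balls); these two families of edge variables are independent.

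Set $\eps=1/108$. By the second inequality of \eqref{eq:BigClusterUnrooted}, each sub-ball contains a cluster of size at least $\eps M_n$ with probability at least $1-27\eps=3/4$, independently over sub-balls, so by Markov's inequality the event $E_1$ that at least $N_0:=\lfloor L^{d\ell}/2\rfloor$ of the sub-balls contain such a cluster has probability at least $1/2$. On $E_1$, choose (measurably with respect to the intra-edges) $N_0$ of these sub-balls carrying large clusters $C_1,\dots,C_{N_0}$, noting that $|C_i|\ge\eps M_n$ and that the $C_i$ are pairwise disjoint since they lie in distinct sub-balls. Now condition on the intra-edges; the inter-edges are still an independent percolation, and I would form the auxiliary graph $G$ on $\{1,\dots,N_0\}$ with $i\sim_G k$ whenever some open inter-edge joins $C_i$ to $C_k$. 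Any two points in distinct sub-balls of $\Lambda_{n+\ell}$ lie in $\Lambda_{n+\ell}$, hence are at distance at most $L^{n+\ell}$, hence $J$-value at least $cL^{-(d+\alpha)(n+\ell)}$; since distinct edges of $G$ use disjoint sets of inter-edge variables, conditionally on the intra-edges they are independent, and each is present with probability at least
\[
p\ :=\ 1-\exp\!\bigl(-\,c\,\eps^2\,\beta\,M_n^2\,L^{-(d+\alpha)(n+\ell)}\bigr)\ \ge\ 1-\exp\!\bigl(-\,c\,\eps^2\,A\,L^{-(d+\alpha)\ell}\bigr),
\]
where the last step uses the hypothesis $\beta M_n^2\ge A L^{(d+\alpha)n}$.

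The proof is then completed by choosing the constants in the right order. First fix $\ell=\ell(d,L,\alpha)$ large enough that $\tfrac{\eps^2}{4}L^{(d-\alpha)\ell}\ge1$ (possible because $\alpha<d$) and that $N_0$ exceeds an absolute threshold $N^\ast$; then fix $A=A(d,L,\alpha,c)$ large enough that $c\eps^2 A L^{-(d+\alpha)\ell}\ge\log 2$, so that $p\ge1/2$. With these choices $G$ stochastically dominates a binomial random graph $G(N_0,1/2)$, which is connected with probability at least $3/4$ whenever $N_0\ge N^\ast$; thus on $E_1$ intersected with $\{G\text{ connected}\}$ --- an event of probability at least $\tfrac34\cdot\tfrac12=\tfrac38>\tfrac1e$ --- the pairwise disjoint clusters $C_1,\dots,C_{N_0}$ are all contained in a single cluster of $\Lambda_{n+\ell}$, which therefore has size at least $N_0\eps M_n\ge\tfrac{\eps}{2}L^{d\ell}M_n$. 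By definition of $M_{n+\ell}$ this yields $M_{n+\ell}(\beta)\ge\tfrac{\eps}{2}L^{d\ell}M_n(\beta)$, and squaring and using the hypothesis once more gives
\[
M_{n+\ell}(\beta)^2\ \ge\ \frac{\eps^2}{4}\,L^{2d\ell}\,M_n(\beta)^2\ \ge\ \frac{\eps^2}{4}\,L^{(d-\alpha)\ell}\cdot\frac{A}{\beta}\,L^{(d+\alpha)(n+\ell)}\ \ge\ \frac{A}{\beta}\,L^{(d+\alpha)(n+\ell)},
\]
as required. The one genuinely delicate point is this ordering of the constant-chasing: $\ell$ must be fixed before $A$, since the deterministic inequality $\tfrac{\eps^2}{4}L^{(d-\alpha)\ell}\ge1$ and the connectivity threshold $N_0\ge N^\ast$ do not involve $A$, whereas making $p\ge\tfrac12$ requires $A$ large in terms of $\ell$; verifying that everything depends only on $d,L,\alpha,c$ is then routine, as is the probabilistic input (reveal intra-edges, then inter-edges, and compare the merging graph to $G(N_0,1/2)$).
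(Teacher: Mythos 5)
Your proof is correct and follows essentially the same renormalization strategy as the paper: decompose $\Lambda_{n+\ell}$ into $L^{d\ell}$ independent copies of $\Lambda_n$, find a positive fraction of them containing clusters of size comparable to $M_n$, merge these via the long inter-ball edges using the hypothesis $\beta M_n^2 \ge A L^{(d+\alpha)n}$, and fix $\ell$ before $A$. The only cosmetic differences are that the paper obtains its large clusters directly from the definition of $M_n$ (probability at least $1/e$ of a cluster of size at least $M_n/2$, via \eqref{eq:atleasttypical}) rather than from the lower-tail bound of \cref{thm:universaltightness}, and merges them by a union bound forcing \emph{every} pair of chosen clusters to be joined by an open edge rather than by connectivity of an Erd\H{o}s--R\'enyi-type auxiliary graph.
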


% \begin{theorem}[The Erdos-Renyi random graph]

% \end{theorem}

\begin{proof}[Proof of \cref{lem:maximum_renormalization}]
Since $\alpha<d$, there exists $\ell_0$ such that $\frac{1}{8}L^{d\ell} \geq L^{(d+\alpha)\ell/2}$ for every $\ell\geq \ell_0$.
The set $\Lambda_{n+\ell}$ contains $L^{d\ell}$ disjoint copies of $\Lambda_n$, each of which contains a cluster of size at least $M_n-1 \geq \frac{1}{2}M_n$ independently with probability at least $1/e>1/4$ by \eqref{eq:atleasttypical}. 
Letting $\sA_{n,\ell}$ be the event that at least $\frac{1}{4}L^{d\ell}$ of these copies of $\Lambda_n$ contains a cluster of size at least $\frac{1}{2}M_n$, it follows by standard concentration estimates for binomial random variables (or indeed by the weak law of large numbers) that we may take $\ell \geq \ell_0$ to be a sufficiently large constant that
$\P(\sA_{n,\ell}) \geq 9/10$ for every $n\geq 0$. We now fix $\ell=\ell(d,L,\alpha)$ to be one such constant.

Fix $n\geq 0$ and let $\cF_n$ be the $\sigma$-algebra generated by those edges of $\bbH^d_L$ that have endpoints at distance at most $L^n$. 
Condition on $\cF_n$ and suppose that $\sA_{n,\ell}$ holds, noting that $\sA_{n,\ell}$ is measurable with respect to $\cF_n$. Since $\sA_{n,\ell}$ holds there exist at least $\frac{1}{4} L^{d\ell}$ copies of $\Lambda_n$ in $\Lambda_{n+\ell}$ containing a cluster of size at least $\frac{1}{2} M_{n}$. Pick one such cluster within each of these copies, and call these clusters $K^1,\ldots,K^m$ where $\frac{1}{4} L^{d\ell} \leq m \leq L^{d\ell}$. We have by the definitions if $\sA_{n,\ell}$ holds then
  % any two clusters of size at least $\frac{1}{2}M_n$ belonging to disjoint copies of $\Lambda_n$ within $\Lambda_{n+\ell}$ are connected within $\Lambda_{n+\ell}$ with probability at least
\begin{align*}
\P_\beta(K^i \xleftrightarrow{\Lambda_{n+\ell}} K^j \mid \cF_n) &\geq \P_\beta(\text{$\exists$ an open edge connecting $K^i$ to $K^j$} \mid \cF_n)
\\
&=
1-\exp\left[ -\beta \sum_{x\in K^i}\sum_{y\in K^j} J(y-x) \right] \geq 1-\exp\left[-\frac{c\beta}{4} M_n^2 L^{-(d+\alpha)(n+\ell)} \right]
\end{align*}
for every $1\leq i  < j \leq m$. It follows by a union bound that
 % and hence that
\begin{align*}
\P_\beta\left( |K^\mathrm{max}_{n+\ell}| \geq \frac{1}{2} M_n \cdot \frac{1}{4} L^{d\ell} \mid \cF_n\right) 
&\geq 
\P_\beta\left( |K^\mathrm{max}_{n+\ell}| \geq \sum_{i=1}^m |K^i|\mid \cF_n\right) \mathbbm{1}(\sA_{n,\ell})\\
&
\geq \left(1-\sum_{1\leq i < j \leq m} \P_\beta(K^i \nxleftrightarrow{\Lambda_{n+\ell}} K^j \mid \cF_n)\right)\mathbbm{1}(\sA_{n,\ell})\\
&\geq \left(1- L^{2d\ell} \exp\left[-\frac{c\beta}{4} M_n^2 L^{-(d+\alpha)(n+\ell)} \right]\right)\mathbbm{1}(\sA_{n,\ell})
\end{align*}
and hence that
\begin{equation}
\P_\beta\left( |K^\mathrm{max}_{n+\ell}| \geq \frac{1}{2} M_n \cdot \frac{1}{4} L^{d\ell} \right) \geq \frac{9}{10}\left(1- L^{2d\ell} \exp\left[-\frac{c\beta}{4} M_n^2 L^{-(d+\alpha)(n+\ell)} \right]\right).
\end{equation}
% In particular, if $M_n^2 \geq \frac{A}{\beta} L^{(d+\alpha)n}$ for some $A>0$ then this probability is at least 
% \[
% 1-\exp\left[ -\frac{cA}{4}L^{-(d+\alpha)\ell}\right].
% \]
% Since $\ell$ is a constant, this probability may be made arbitrarily close to $1$ by taking $A$ large. Thus, it follows from the classical theory of Erd\"os-Renyi random graphs that all of these scale-$n$ clusters merge to form a single cluster of size at least $\frac{1}{2} M_n \cdot \frac{1}{e^2} L^{dk}$ in $\Lambda_{n+k}$ with probability at least $p(A)$ for some $p(A)\uparrow 1$ as $A \to \infty$. 
It follows that there exists a constant $A=A(d,L,\alpha,c)$ such that if  $M_n^2 \geq \frac{A}{\beta} L^{(d+\alpha)n}$ then
\[
\P_\beta\left( |K^\mathrm{max}_{n+\ell}| \geq \frac{1}{2} M_n \cdot \frac{1}{4} L^{d\ell} \right)  \geq \frac{9}{10} \left(1- L^{2d\ell} \exp\left[-\frac{c A}{4} L^{-(d+\alpha)\ell} \right]\right) > \frac{1}{e}
\]
% \[
% \P_\beta\left(K_{n+k}^\mathrm{max} \geq \frac{A^{1/2}}{2e^2 \beta^{1/2}} M_n \cdot \frac{1}{e^2} L^{dk} \mid \sA\right)\geq
 % \P_\beta\left(K_{n+k}^\mathrm{max} \geq \frac{1}{2} M_n \cdot \frac{1}{e^2} L^{dk} \mid \sA\right) \geq 9/10.\] We deduce under the same assumptions that $\P(K_{n+k}^\mathrm{max} \geq \frac{1}{2} M_n \cdot \frac{1}{e^2} L^{dk}) \geq (9/10)^2 > 1/e$
  and hence that 
\begin{equation}M_{n+\ell} \geq \frac{1}{8} L^{d\ell}M_n \geq L^{(d+\alpha)\ell/2} M_n \geq \sqrt{\frac{A}{\beta} L^{(d+\alpha)(n+\ell)}}\end{equation}
as desired.
\end{proof}

We now deduce \cref{prop:maximum_upper} from \cref{lem:maximum_renormalization}. The proof will use the \emph{sharpness of the phase transition}, a fundamental result in percolation theory originally due to Menshikov \cite{MR852458} and Aizenman and Barsky \cite{aizenman1987sharpness} which is known to hold for arbitrary transitive weighted graphs \cite{aizenman1987sharpness,duminil2015new,1901.10363}. We state the theorem for the hierarchical lattice only as this is the only case relevant to us.

\begin{theorem}[Sharpness of the phase transition]
\label{thm:sharpness}
Let $d\geq 1$, $L\geq 2$, and let $J:\mathbb{H}^d_L \to [0,\infty)$ be a symmetric, integrable function. Then $\E_\beta |K| < \infty$ for every $0\leq \beta < \beta_c$.
% 
%  Moreover, for each $0\leq \beta < \beta_c$ there exists a constant $c_\beta>0$ such that
% \[
% \P_\beta(|K| \geq n) \leq e^{-c_\beta n}
% \]
% for every $n\geq 1$.
\end{theorem}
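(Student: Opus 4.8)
Since $\bbH^d_L$ is a countable abelian group and $J$ is symmetric and summable, long-range percolation on $(\bbH^d_L,J)$ is a transitive percolation model with summable coupling constants, so \cref{thm:sharpness} is an instance of the general sharpness theorems of Menshikov, Aizenman--Barsky, and Duminil-Copin--Tassion \cite{MR852458,aizenman1987sharpness,duminil2015new,1901.10363}; in the paper we will simply invoke one of these. For completeness let me sketch the Duminil-Copin--Tassion argument, which carries over to the hierarchical setting with no essential changes. For a finite set $S\ni 0$, define
\[
\varphi_\beta(S):=\sum_{x\in S}\sum_{y\notin S}\P_\beta\bigl(0\xleftrightarrow{S}x\bigr)\bigl(1-e^{-\beta J(x-y)}\bigr),
\]
which is finite (by summability of $J$) and nondecreasing in $\beta$, and put $\tilde\beta_c:=\sup\{\beta\ge 0:\varphi_\beta(S)<1\text{ for some finite }S\ni 0\}$. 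The plan is to show that $\E_\beta|K|<\infty$ whenever $\beta<\tilde\beta_c$ and that $\tilde\beta_c=\beta_c$, which together give the theorem.

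\emph{Step 1 ($\beta<\tilde\beta_c\Rightarrow\E_\beta|K|<\infty$, hence $\tilde\beta_c\le\beta_c$).} Fix such a $\beta$ and, using monotonicity of $\varphi_{\cdot}(S)$, a finite $S\ni 0$ with $\varphi_\beta(S)<1$. The key input is the cluster-exploration estimate: for every $m$ with $S\subseteq\Lambda_m$ and every $x\in\Lambda_m\setminus S$,
\[
\P_\beta\bigl(0\xleftrightarrow{\Lambda_m}x\bigr)\le\sum_{y\in S}\sum_{z\in\Lambda_m\setminus S}\P_\beta\bigl(0\xleftrightarrow{S}y\bigr)\bigl(1-e^{-\beta J(y-z)}\bigr)\,\P_\beta\bigl(z\xleftrightarrow{\Lambda_m}x\bigr),
\]
which follows from the van den Berg--Kesten inequality after decomposing an open path from $0$ to $x$ at the last edge leaving the cluster $C$ of $0$ in the edges internal to $S$ (and using that, by maximality of $C$, this edge lands outside $S$). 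Summing over $x\in\Lambda_m$, bounding $\sum_{x\in S}\P_\beta(0\xleftrightarrow{\Lambda_m}x)\le|S|$, and using $\sum_{x\in\Lambda_m}\P_\beta(z\xleftrightarrow{\Lambda_m}x)=\E_\beta|K_m(z)|=\E_\beta|K_m|$ for every $z\in\Lambda_m$ (as $\Lambda_m$ is a finite subgroup, hence vertex-transitive with $\Lambda_m(z)=\Lambda_m$), we get $\E_\beta|K_m|\le|S|+\varphi_\beta(S)\E_\beta|K_m|$. Since $\Lambda_m$ is finite we may rearrange to $\E_\beta|K_m|\le|S|/(1-\varphi_\beta(S))$ uniformly in $m$, and letting $m\to\infty$ (so that $|K_m|\uparrow|K|$) gives $\E_\beta|K|\le|S|/(1-\varphi_\beta(S))<\infty$. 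In particular $\P_\beta(|K|=\infty)=0$, so $\beta\le\beta_c$; as this holds for every $\beta<\tilde\beta_c$, we conclude $\tilde\beta_c\le\beta_c$.

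\emph{Step 2 ($\beta>\tilde\beta_c\Rightarrow\P_\beta(|K|=\infty)>0$, hence $\tilde\beta_c\ge\beta_c$).} Write $\theta_n(\beta)=\P_\beta(0\leftrightarrow\bbH^d_L\setminus\Lambda_n)$, so that $\theta_n\downarrow\P_\beta(|K|=\infty)$ as $n\to\infty$. Differentiating in $\beta$ by Russo's formula (applied in finite volume; the relevant sums over the infinitely many edges converge absolutely by summability of $J$) and estimating the pivotal probabilities from below via the adaptive stopping-set argument of \cite{duminil2015new} yields a differential inequality of the form
\[
\frac{d}{d\beta}\theta_n(\beta)\ge\kappa(\beta)\Bigl(\inf_{S\ni 0,\,|S|<\infty}\varphi_\beta(S)\Bigr)\bigl(1-\theta_n(\beta)\bigr),
\]
valid for all $n$, where $\kappa(\beta)>0$ is bounded away from $0$ on compact subsets of $(0,\infty)$ (this uses $\tfrac{d}{d\beta}(1-e^{-\beta J(x-y)})\ge\kappa(\beta)(1-e^{-\beta J(x-y)})$, with the bound on $J$ entering through $\kappa$). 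On $(\tilde\beta_c,\infty)$ the infimum is at least $1$ by definition of $\tilde\beta_c$, so integrating over $[\tilde\beta_c+\eps,\beta]$ and letting $n\to\infty$ gives $\P_\beta(|K|=\infty)>0$ for every $\beta>\tilde\beta_c$, whence $\beta_c\le\tilde\beta_c$. Combining Steps 1 and 2, $\tilde\beta_c=\beta_c$, and therefore $\E_\beta|K|<\infty$ for every $0\le\beta<\beta_c$, as required.

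The main obstacle is Step 2: making the Russo-type differential inequality rigorous in the infinite-degree long-range setting — establishing differentiability of the finite-volume quantities with an absolutely convergent derivative, and pushing the lower bound on pivotal probabilities through the adaptive stopping-set construction — is the technical heart of the sharpness theorems. Since this is already available for arbitrary transitive weighted graphs, in the paper this step is discharged by citation rather than reproved.
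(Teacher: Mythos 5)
Correct, and essentially the same approach as the paper: the paper states this theorem without proof, simply invoking the sharpness results of Menshikov, Aizenman--Barsky, and Duminil-Copin--Tassion for arbitrary transitive weighted graphs, exactly as you do. Your supplementary sketch of the Duminil-Copin--Tassion argument is accurate (Step 1 is complete, and Step 2 is legitimately discharged by citation), and it is consistent with the paper's own later use of the quantity $\phi_\beta(S)$ in the proof of \cref{prop:lower_bound}.
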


\begin{proof}[Proof of \cref{prop:maximum_upper}]
Let $A$ be the constant from \cref{lem:maximum_renormalization}.
We have by sharpness of the phase transition that if $\beta<\beta_c$ then $\E_\beta |K| = \sup_n \E_\beta |K_n| < \infty$, and  it follows from \eqref{eq:susceptibility>max} that
\begin{equation}
\label{eq:sharpnessM}
\limsup_{n\to\infty} L^{-dn}M_n(\beta)^2 \leq \limsup_{n\to\infty} 4e \E_\beta |K_n| < \infty \qquad \text{ for every $0\leq \beta <\beta_c$.}
\end{equation}
On the other hand, if there were to exist $0\leq \beta<\beta_c$ and $n\geq 0$ such that $M_n(\beta)^2 \geq \frac{A}{\beta} L^{(d+\alpha)n}$ then we would have inductively by \cref{lem:maximum_renormalization} that
$M_{n+i\ell}(\beta)^2 \geq \frac{A}{\beta}L^{(d+\alpha)(n+i\ell)}$
for every $i\geq 1$, which would contradict \eqref{eq:sharpnessM}. Thus, we must instead have that
\[
M_n(\beta)^2 <  \frac{A}{\beta} L^{(d+\alpha)n}
\]
for every $n\geq 0$ and $0\leq \beta<\beta_c$. The claim follows by taking the limit as $\beta \uparrow \beta_c$.
 % and using that $M_n(\beta)$ is an increasing, left-continuous function of $\beta$.
\end{proof}

\begin{remark}
We conjecture that the inequality of \cref{prop:maximum_upper} is of the correct order if and only if $\alpha>d/3$. It is certainly not of the correct order when $\alpha < d/3$. Indeed, it is a consequence of the \emph{tree-graph inequality} method of Aizenman and Newman \cite{MR762034} (see in particular \cite[Equation 6.99]{grimmett2010percolation}) that
\begin{equation}
\P_\beta(|K_n| \geq m) \leq \frac{\sqrt{2}\E_\beta |K_n|}{m}\exp\left[-\frac{m}{4(\E_\beta |K_n|)^2}\right]
\end{equation}
for every $0\leq \beta <\infty$, $n\geq 1$, and $m \geq 1$, and hence by Markov's inequality that
\begin{equation}
\P_\beta(|K_n^\mathrm{max}| \geq m) \leq \frac{1}{m} \sum_{x\in \Lambda_n} \P_\beta(|K_n(x)| \geq m) \leq \frac{\sqrt{2} L^{dn} \E_\beta|K_n|}{m^{2}}\exp\left[-\frac{m}{4(\E_\beta |K_n|)^2}\right]
\end{equation}
for every $0\leq \beta <\infty$, $n\geq 1$, and $m \geq 1$. Applying \cref{thm:main} we deduce that there exist positive constants $A_1$ and $a$ such that
\begin{equation}
\P_{\beta_c}(|K_n^\mathrm{max}| \geq m) \leq \frac{A_1 L^{(d+\alpha)n}}{m^{2}}\exp\left[-a L^{-2\alpha n} m\right]
\end{equation}
for every  $n\geq 1$ and $m \geq 1$. It follows that there exists a constant $A_2$ such that 
\begin{equation}
M_n(\beta_c) \leq A_2 n L^{2\alpha n}
\end{equation}
for every $n\geq 1$, which is of lower order than the bound of \cref{prop:maximum_upper} when $\alpha<d/3$.
 % $M_n(\beta_c) \preceq L^{(d+\alpha)n/2}$ is \emph{not} sharp when $\alpha<1/3$. Indeed, the tree-graph inequality gives that $M_n(\beta_c) \preceq nL^{2\alpha n}$.
\end{remark}

\subsection{Upper bounds on the restricted two-point function}
\label{sec:restricted_upper}

We now apply the results of \cref{sec:maximum} to study the expected volume of the cluster of the origin within an ultrametric ball. 
% We will then apply this bound to deduce a similar upper bound on the unrestricted two-point function $\P_\beta(0\leftrightarrow x)$ in the next subsection.

\begin{prop}
\label{prop:susceptibility_upper}
Let $J:\mathbb{H}^d_L \to [0,\infty)$ be a symmetric, integrable function, let $0<\alpha<d$, and suppose that there exists a positive constant $c$ such that $J(x) \geq c\|x\|^{-d-\alpha}$ for every $x\in \mathbb{H}^d_L\setminus\{0\}$.
Then there exists a positive constant $A=A(d,L,\alpha,c)$ such that
\[\E_\beta |K_{n}| = \sum_{x\in \Lambda_n} \P_\beta\bigl(0\xleftrightarrow{\Lambda_n}x\bigr) \leq \frac{A}{\beta_c} L^{\alpha n} \]
for every $0<\beta \leq \beta_c$ and $n \geq 0$.
\end{prop}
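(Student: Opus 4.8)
The plan is to prove \cref{prop:susceptibility_upper} by a renormalization argument paralleling \cref{sec:maximum}: establish a recursive \emph{lower} bound showing that if $\E_\beta|K_n|$ ever substantially exceeds $\tfrac1\beta L^{\alpha n}$ then it exceeds the analogous quantity at every larger scale, and then rule this out for $\beta<\beta_c$ using sharpness of the phase transition (\cref{thm:sharpness}), finally transferring the resulting bound to $\beta_c$ by continuity and monotonicity in $\beta$.

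Write $S_n(\beta)=\E_\beta|K_n|$ and fix $\beta>0$. Viewing $\Lambda_{n+1}$ as $L^d$ disjoint copies of $\Lambda_n$ together with the long edges joining distinct copies --- each of which has both endpoints at distance exactly $L^{n+1}$, hence is open with probability at least $1-e^{-\beta c L^{-(d+\alpha)(n+1)}}$ by the hypothesis on $J$ --- and using that $K_{n+1}(0)$ contains $K_n(0)$ together with every cluster of another copy joined to $K_n(0)$ by an open long edge, a computation conditioning on the configuration inside each copy yields a lower bound of the schematic form
\[
S_{n+1}(\beta)\ \ge\ S_n(\beta)+c_1\,\beta\,L^{-\alpha n}\,S_n(\beta)^2-(\mathrm{error}_n),
\]
where the main term comes from $1-e^{-x}\ge x-\tfrac12x^2$ together with the identity $\E_\beta\bigl[\sum_{D}|D|^2\bigr]=L^{dn}S_n(\beta)$ for the sum over clusters $D$ of $\Lambda_n$, and $(\mathrm{error}_n)$ collects the second-order part of this expansion and involves higher cluster-size moments. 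The role of the first two terms is that, once $S_n(\beta)\ge\tfrac A\beta L^{\alpha n}$ with $A=A(d,L,\alpha,c)$ chosen large enough, the quadratic term is at least $c_1A\,S_n(\beta)$, so $S_{n+1}(\beta)\ge(1+c_1A)S_n(\beta)-(\mathrm{error}_n)$; since $1+c_1A>L^\alpha$, provided $(\mathrm{error}_n)\le\tfrac{c_1A^2}{2\beta}L^{\alpha n}$ this gives $S_{n+1}(\beta)\ge\tfrac A\beta L^{\alpha(n+1)}$, the hypothesis propagates to all larger scales, and $S_n(\beta)\to\infty$. Since $\sup_n S_n(\beta)=\E_\beta|K|<\infty$ for every $\beta<\beta_c$ by \cref{thm:sharpness}, this is impossible, so $S_n(\beta)<\tfrac A\beta L^{\alpha n}$ for all $n$ and all $\beta<\beta_c$. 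As $\beta\mapsto S_n(\beta)$ is continuous and nondecreasing, letting $\beta\uparrow\beta_c$ gives $S_n(\beta_c)\le\tfrac A{\beta_c}L^{\alpha n}$, and then $S_n(\beta)\le S_n(\beta_c)\le\tfrac A{\beta_c}L^{\alpha n}$ for every $\beta\le\beta_c$, which is the assertion.

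The main obstacle is controlling $(\mathrm{error}_n)$. Bounding all cluster-size moments by the corresponding powers of $M_n$ is too crude: since $M_n(\beta)$ can be of order $L^{(d+\alpha)n/2}$ --- far larger than the expected order $L^{\alpha n}$ of $S_n(\beta_c)$ --- such a bound lets the error swamp the quadratic main term at large $n$. Instead one should truncate the contribution of atypically large clusters before expanding $1-e^{-x}$, using the tightness bounds of \cref{thm:universaltightness} and the estimate $M_n(\beta)\le\tfrac A\beta L^{(d+\alpha)n/2}$ from \cref{prop:maximum_upper} to show the truncation costs only a bounded factor, and perhaps running the renormalization over a fixed block of $\ell$ scales rather than one. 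Verifying that the connection probabilities that appear genuinely lie in the regime where the linear bound $1-e^{-x}\gtrsim x$ applies --- on a suitable event of probability bounded away from zero --- is where the real work lies; the remaining limiting and monotonicity steps are routine.
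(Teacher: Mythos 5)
Your overall architecture is the same as the paper's: prove a one-step renormalization inequality of the form $\E_\beta|K_{n+1}|\gtrsim\beta L^{-\alpha n}(\E_\beta|K_n|)^2$ by connecting the cluster of the origin to the clusters of the other $L^d-1$ copies of $\Lambda_n$ inside $\Lambda_{n+1}$; observe that if $\E_\beta|K_n|$ ever exceeds $\tfrac{A}{\beta}L^{\alpha n}$ the recursion forces $\E_\beta|K_m|\to\infty$; contradict \cref{thm:sharpness} for $\beta<\beta_c$; and pass to $\beta_c$ by continuity and monotonicity. You also correctly diagnose the one genuinely delicate point: expanding $1-e^{-x}\geq x-\tfrac12x^2$ and bounding the resulting second-moment error via $M_n\lesssim\beta^{-1}L^{(d+\alpha)n/2}$ produces an error term of the same order as (in fact $1/\beta$ times) the main term, so one must truncate cluster sizes at the scale of the typical maximum before linearizing. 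That truncation is exactly the paper's \cref{lem:truncated_susceptibility}.

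The gap is that you announce this truncation step rather than carry it out, and it is the entire technical content of the proof. For the record, the paper closes it as follows. Writing $h=h(n+1)=c\beta L^{-(d+\alpha)(n+1)}$, it avoids the Taylor expansion altogether and uses $1-e^{-t}\geq(1-e^{-1})(t\wedge1)$ together with the elementary factorization $h^{-1}\wedge(|K_n|\,|C|)\geq(h^{-1/2}\wedge|K_n|)(h^{-1/2}\wedge|C|)$. Since $K_n$ is independent of the clusters $C$ of the other copies of $\Lambda_n$, and since $\E_\beta\bigl[\sum_C|C|\,(h^{-1/2}\wedge|C|)\bigr]=L^{dn}\,\E_\beta\bigl[h^{-1/2}\wedge|K_n|\bigr]$ by the averaging identity \eqref{eq:cluster_average}, everything reduces to showing $\E_\beta\bigl[|K_n|\wedge(\lambda L^{(d+\alpha)n/2})\bigr]\geq a\lambda\,\E_\beta|K_n|$. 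This is where \cref{thm:universaltightness} and \cref{prop:maximum_upper} enter: the exponential tail \eqref{eq:BigClusterRooted} beyond $M_n$ gives $\E_\beta[|K_n|\wedge\lfloor 99M_n\rfloor]\geq\tfrac12\E_\beta|K_n|$, and $M_n\leq\tfrac{A}{\beta}L^{(d+\alpha)n/2}$ converts this into the truncated bound via $\E[|K_n|\wedge\lambda N]\geq\lambda\,\E[|K_n|\wedge N]$ for $\lambda\in[0,1]$. If you supply this lemma (or an equivalent) your proof goes through; note also that, unlike the renormalization of $M_n$ in \cref{lem:maximum_renormalization}, no conditioning on an event of probability bounded away from zero is needed here --- the whole argument runs in expectation.
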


We will deduce \cref{prop:susceptibility_upper} from the following renormalization lemma.
 % whose proof will make use of \cref{prop:maximum_upper}. 

\begin{lemma}[Renormalization of the restricted susceptibility]
\label{lem:susceptibility_renormalization}
Let $J:\mathbb{H}^d_L \to [0,\infty)$ be a symmetric, integrable function, let $0<\alpha<d$, and suppose that there exists a positive constant $c$ such that $J(x) \geq c\|x\|^{-d-\alpha}$ for every $x\in \mathbb{H}^d_L\setminus\{0\}$. Then there exists a positive constant $a=a(d,L,\alpha,c)$ such that
\[\E_\beta |K_{n+1}| \geq \sum_{x\in \Lambda_{n+1} \setminus \Lambda_n} \P_\beta(0 \xleftrightarrow{\Lambda_{n+1}} x) \geq a \beta L^{-\alpha n} \left(\E_\beta |K_{n}|\right)^2\]
for every $0<\beta \leq \beta_c$ and $n \geq 0$.
\end{lemma}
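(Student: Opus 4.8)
The plan is to condition on the edges internal to the $L^d$ disjoint copies of $\Lambda_n$ sitting inside $\Lambda_{n+1}$, and to connect $K_n=K_n(0)$ to the clusters of other vertices using a single crossing edge. Let $\cF_n$ be the $\sigma$-algebra generated by the edges of $\bbH^d_L$ with both endpoints at distance $\le L^n$ (equivalently, the edges internal to the cosets of $\Lambda_n$ in $\Lambda_{n+1}$), fix one coset $\Lambda'\ne\Lambda_n$ — which exists since $L^d\ge2$ — and for $x\in\Lambda'$ write $K_n(x)$ for the cluster of $x$ inside $\Lambda'$. I will use that $\Lambda'\subseteq\Lambda_{n+1}\setminus\Lambda_n$; that every edge between $\Lambda_n$ and $\Lambda'$ joins points at distance exactly $L^{n+1}$; that $K_n(0)$ and $(K_n(x))_{x\in\Lambda'}$ are $\cF_n$-measurable and, being functions of disjoint edge-sets, independent; and that $K_n(x)$ has the same law as $K_n$ for every $x\in\Lambda'$ by translation-invariance. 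Since $\{\text{some edge between }K_n(0)\text{ and }K_n(x)\text{ is open}\}\subseteq\{0\xleftrightarrow{\Lambda_{n+1}}x\}$, the hypothesis $J(y)\ge c\|y\|^{-d-\alpha}$ gives
\[
\P_\beta\bigl(0\xleftrightarrow{\Lambda_{n+1}}x \,\big|\, \cF_n\bigr) \ge 1 - \exp\Bigl(-\beta\!\!\!\sum_{\substack{u\in K_n(0)\\ w\in K_n(x)}}\!\!\!J(w-u)\Bigr) \ge 1-\exp\bigl(-\theta\,|K_n(0)|\,|K_n(x)|\bigr),\qquad \theta:=c\,L^{-(d+\alpha)(n+1)}\beta.
\]

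Summing this over $x\in\Lambda'$, restricting to those $x$ with $|K_n(x)|\le T$ and to the event $\{|K_n(0)|\le T\}$ for a deterministic level $T$ chosen below, and then taking expectations, the independence noted above together with the identity $\E_\beta\sum_{x\in\Lambda'}|K_n(x)|\mathbbm{1}(|K_n(x)|\le T)=|\Lambda'|\,\E_\beta[|K_n|\mathbbm{1}(|K_n|\le T)]$ will yield — using $\E_\beta[|K_{n+1}\cap\Lambda'|]=\sum_{x\in\Lambda'}\P_\beta(0\xleftrightarrow{\Lambda_{n+1}}x)\le\sum_{x\in\Lambda_{n+1}\setminus\Lambda_n}\P_\beta(0\xleftrightarrow{\Lambda_{n+1}}x)\le\E_\beta|K_{n+1}|$ for the outer inequalities of the lemma — the bound
\[
\E_\beta|K_{n+1}| \ge \E_\beta\bigl[|K_{n+1}\cap\Lambda'|\bigr] \ge \frac{1-e^{-\theta T^2}}{\theta T^2}\cdot\theta\,|\Lambda'|\cdot\bigl(\E_\beta[|K_n|\mathbbm{1}(|K_n|\le T)]\bigr)^2,
\]
where I used that $t\mapsto 1-e^{-t}$ is concave, so $1-e^{-s}\ge\tfrac{1-e^{-\theta T^2}}{\theta T^2}\,s$ for $s\in[0,\theta T^2]$, and that $\theta|\Lambda'|=c\,L^{-(d+\alpha)}\beta\,L^{-\alpha n}$. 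For this to deliver the asserted estimate I need (i) $\theta T^2$ bounded by a constant depending only on $d,L,\alpha,c$, so the first factor is a positive constant, and (ii) $\E_\beta[|K_n|\mathbbm{1}(|K_n|\le T)]\ge\tfrac12\E_\beta|K_n|$. I will take $T:=\lambda_\star\sqrt{A/\beta}\,L^{(d+\alpha)n/2}$, where $A=A(d,L,\alpha,c)$ is the constant from \cref{prop:maximum_upper} — whose proof in fact establishes the sharper bound $M_n(\beta)^2\le(A/\beta)L^{(d+\alpha)n}$ for all $0<\beta\le\beta_c$ and $n\ge0$ — and $\lambda_\star\ge1$ is an absolute constant fixed below; then $T\ge\lambda_\star M_n$ and $\theta T^2=c\,L^{-(d+\alpha)}\lambda_\star^2 A$ is a constant, giving (i).

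The remaining point (ii) is the crux, and is where the concentration of $|K_n|$ from \cref{thm:universaltightness} enters. Writing $p_n:=\P_\beta(|K_n|\ge M_n)$, the rooted tail bound \eqref{eq:BigClusterRooted} gives $\P_\beta(|K_n|\ge t)\le p_n e^{1-t/(9M_n)}$ for $t\ge M_n$; integrating this from $T=\lambda_0 M_n$ (with $\lambda_0:=T/M_n\ge\lambda_\star$) gives $\E_\beta[|K_n|\mathbbm{1}(|K_n|>T)]\le(\lambda_0+9)e^{1-\lambda_0/9}\,M_n p_n$. Since $|K_n|\ge M_n\mathbbm{1}(|K_n|\ge M_n)$ forces $M_n p_n\le\E_\beta|K_n|$, and $\lambda\mapsto(\lambda+9)e^{1-\lambda/9}$ is decreasing on $(0,\infty)$, it suffices to fix the absolute constant $\lambda_\star$ so that $(\lambda_\star+9)e^{1-\lambda_\star/9}\le\tfrac12$; then (ii) holds. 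Assembling the pieces proves the lemma with $a=\tfrac14\,c\,L^{-(d+\alpha)}\,\tfrac{1-e^{-cL^{-(d+\alpha)}\lambda_\star^2 A}}{cL^{-(d+\alpha)}\lambda_\star^2 A}$, a constant depending only on $d,L,\alpha,c$. I expect the interplay behind (i)–(ii) to be the main obstacle: the argument genuinely requires the \emph{sharp} $\beta^{-1}$ (not $\beta^{-2}$) a priori bound on $M_n(\beta)^2$ extracted from the proof of \cref{prop:maximum_upper}, since with only the weaker bound the prefactor $\theta T^2$ would grow like $\beta^{-1}$ and cost an extra factor of $\beta$ in the conclusion.
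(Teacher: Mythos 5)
Your proof is correct and follows essentially the same route as the paper's: condition on $\cF_n$, lower-bound the probability that $K_n(0)$ is joined by a single open edge to each cluster of a sibling copy of $\Lambda_n$, linearize $1-e^{-s}$ after truncating cluster sizes at the scale $\beta^{-1/2}L^{(d+\alpha)n/2}$, and control the truncation using \cref{prop:maximum_upper} together with \eqref{eq:BigClusterRooted}. The only differences are cosmetic — you use one coset instead of all $L^d-1$, truncate with indicators rather than with $\wedge$, and re-derive inline the content of \cref{lem:truncated_susceptibility} — and your observation that one needs the bound $M_n(\beta)^2\leq (A/\beta)L^{(d+\alpha)n}$ (which is what the proof of \cref{prop:maximum_upper} actually establishes) is exactly the $\beta$-cancellation that the paper exploits implicitly by matching the truncation level $h(n+1)^{-1/2}\asymp\beta^{-1/2}L^{(d+\alpha)n/2}$ to the scale of $M_n$.
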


In order to prove this lemma, we first use \cref{prop:maximum_upper,thm:universaltightness} to prove the following supporting technical estimate. We write $a \wedge b = \min\{a,b\}$.

\begin{lemma}[Truncating at the typical maximum]
\label{lem:truncated_susceptibility}
Let $J:\mathbb{H}^d_L \to [0,\infty)$ be a symmetric, integrable function, let $0<\alpha<d$, and suppose that there exists a positive constant $c$ such that $J(x) \geq c\|x\|^{-d-\alpha}$ for every $x\in \mathbb{H}^d_L\setminus\{0\}$.
Then there exists a positive constant $a=a(d,L,\alpha,c)$ such that
\[
\E_\beta \left[ |K_n| \wedge \bigl(\lambda L^{(d+\alpha)n/2}\bigr) \right] \geq a \lambda  \E_\beta |K_n|\]
for every $0<\beta \leq \beta_c$, $0<\lambda \leq 1$ and $n \geq 0$.
\end{lemma}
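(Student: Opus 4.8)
\emph{Proof plan.} The strategy is to reduce the claim to the control of the typical maximum $M_n=M_n(\beta)$ provided by \cref{thm:universaltightness,prop:maximum_upper}.

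\textbf{Step 1: the first moment essentially lives below the scale $M_n$.} I would first establish that there is an absolute constant $c_0>0$ with
\[
\E_\beta\!\left[|K_n|\wedge M_n\right]\ \ge\ c_0\,\E_\beta|K_n|\qquad\text{for every }\beta\ge 0\text{ and }n\ge 0.
\]
To see this, write $\E_\beta|K_n|=\sum_{m\ge 1}\P_\beta(|K_n|\ge m)$ and split the sum at $m=M_n$. The terms with $1\le m\le M_n$ sum to exactly $\E_\beta[|K_n|\wedge M_n]$ and are at least $M_n\,\P_\beta(|K_n|\ge M_n)$. For the terms with $m>M_n$, the rooted exponential tail bound \eqref{eq:BigClusterRooted} together with $\P_\beta(|K_n|\ge M_n)\le\P_\beta(|K_n^{\mathrm{max}}|\ge M_n)\le e^{-1}$ gives $\sum_{m>M_n}\P_\beta(|K_n|\ge m)\le \P_\beta(|K_n|\ge M_n)\sum_{m>M_n}e^{\,1-m/(9M_n)}\le 9e\,M_n\,\P_\beta(|K_n|\ge M_n)$. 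Adding the two estimates yields $\E_\beta|K_n|\le(1+9e)\,\E_\beta[|K_n|\wedge M_n]$, i.e.\ the displayed bound with $c_0=(1+9e)^{-1}$.

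\textbf{Step 2: descending from scale $M_n$ to scale $\lambda L^{(d+\alpha)n/2}$.} By \cref{prop:maximum_upper} there is $A_0=A_0(d,L,\alpha,c)$ with $M_n(\beta)\le (A_0/\beta)\,L^{(d+\alpha)n/2}$, so that $\lambda L^{(d+\alpha)n/2}\ge \nu\,M_n$ with $\nu:=\min\{1,\lambda\beta/A_0\}\in(0,1]$. Using the monotonicity of $u\mapsto |K_n|\wedge u$ and the elementary pointwise inequality $t\wedge(\nu s)\ge \nu\,(t\wedge s)$, valid for all $t,s\ge0$ and $0<\nu\le1$, I obtain
\[
\E_\beta\!\left[|K_n|\wedge\bigl(\lambda L^{(d+\alpha)n/2}\bigr)\right]\ \ge\ \E_\beta\!\left[|K_n|\wedge(\nu M_n)\right]\ \ge\ \nu\,\E_\beta\!\left[|K_n|\wedge M_n\right]\ \ge\ \nu c_0\,\E_\beta|K_n|.
\]

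\textbf{The main obstacle.} What remains is to absorb $\nu$ into a constant multiple of $\lambda$, i.e.\ to upgrade \cref{prop:maximum_upper} to a bound $M_n(\beta)\le A_1\,L^{(d+\alpha)n/2}$ with $A_1$ depending only on $d,L,\alpha,c$ and \emph{not} on $\beta\in(0,\beta_c]$; this is where I expect the real work to be, since the estimate of \cref{prop:maximum_upper} deteriorates as $\beta\downarrow 0$. I would handle the small-$\beta$ regime separately, combining the trivial bound $M_n\le|\Lambda_n|=L^{dn}$, the lower bound $\E_\beta|K_n|\ge M_n^2/(4eL^{dn})$ from \eqref{eq:susceptibility>max}, and the boundedness $\beta_c\le\beta_1(d,L,\alpha,c)$ from \cref{cor:betabound}. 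Granting the $\beta$-uniform bound on $M_n$, Steps 1--2 give the lemma with $a=c_0/A_1$.
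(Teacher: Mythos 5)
Your Steps 1 and 2 are, modulo bookkeeping, exactly the paper's proof: the paper also truncates at a fixed multiple of $M_n$ (it takes $N=\lfloor 99 M_n\rfloor$ and uses Markov's inequality where you truncate at $M_n$ itself and use $M_n\P_\beta(|K_n|\ge M_n)\le \E_\beta[|K_n|\wedge M_n]$), deduces $\E_\beta[|K_n|\wedge N]\ge \frac12\E_\beta|K_n|$ from \eqref{eq:BigClusterRooted}, and then concludes by combining the rescaling inequality $\E_\beta[|K_n|\wedge(\lambda N)]\ge\lambda\E_\beta[|K_n|\wedge N]$ with \cref{prop:maximum_upper}. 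In particular the paper stops exactly where your Step 2 stops; it does not carry out anything like your Step 3, and read literally its argument also only produces a constant of order $\min\{1,\beta\}$ rather than one depending only on $(d,L,\alpha,c)$. So the ``main obstacle'' you flag is a genuine feature of the printed proof, not something you are missing, and your execution of the part of the argument the paper actually gives is correct.

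Two remarks on the obstacle itself. First, your sketched resolution does not work: the trivial bound $M_n\le L^{dn}$ is useless because $L^{dn}\gg L^{(d+\alpha)n/2}$ when $\alpha<d$, and \eqref{eq:susceptibility>max} bounds $\E_\beta|K_n|$ from \emph{below} by $M_n^2/(4eL^{dn})$, so turning it into an upper bound on $M_n$ would require the bound $\E_\beta|K_n|\lesssim L^{\alpha n}$ of \cref{prop:susceptibility_upper} --- which is proved \emph{using} the present lemma, so this is circular. The clean route to a $\beta$-uniform constant is monotonicity, $M_n(\beta)\le M_n(\beta_c)\le (A/\beta_c)L^{(d+\alpha)n/2}$, combined with a lower bound on $\beta_c$; but that lower bound (\cref{prop:lower_bound}) needs the hypothesis $J(x)\le C\|x\|^{-d-\alpha}$, which is not assumed in the lemma, so one would have to let $a$ depend on $C$ as well. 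Second, the imprecision is harmless downstream: the lemma is only invoked in the proof of \cref{lem:susceptibility_renormalization}, whose conclusion already carries an explicit factor of $\beta$, and the extra powers of $\beta$ picked up along the way are absorbed at the end using the two-sided bounds $(L^\alpha-1)/C\le\beta_c\le\beta_1$ available under the hypotheses of \cref{thm:main}. In short: your proof matches the paper's, and the residual issue you identify is one the paper shares (and which affects nothing).
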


\begin{proof}[Proof of \cref{lem:truncated_susceptibility}]
Let $A$ be the constant from \cref{prop:maximum_upper}. Fix $n \geq 0$ and let $N=\lfloor 99 M_n \rfloor$.
Then we have that 
\[
\E_\beta |K_n| = \sum_{k = 1}^\infty \P_\beta(|K_n| \geq k) \qquad \text{ and } \qquad \E_\beta \left[|K_n| \wedge N\right] = \sum_{k = 1}^N \P_\beta(|K_n| \geq k),
\]
and hence by \cref{thm:universaltightness} that
\begin{align*}
\E_\beta |K_n| - \E_\beta \left[|K_n| \wedge N\right] = \sum_{k=N+1}^\infty \P_\beta(|K_n| \geq k)
&\leq e \P_\beta(|K_n| \geq M_n) \sum_{k= N+1}^\infty e^{-k/9M_n}.
\end{align*}
Since $N+1 \geq 99 M_n$, we can therefore compute by Markov's inequality that
\begin{align*}
\E_\beta |K_n| - \E_\beta \left[|K_n| \wedge N\right] &\leq \frac{e^{1-11}}{1-e^{-1/9M_n}} \P_\beta(|K_n| \geq M_n) \leq \frac{e^{-10}}{(1-e^{-1/9M_n}) M_n} \E_\beta|K_n| \leq \frac{1}{2} \E_\beta |K_n|,
\end{align*}
where the final inequality follows by calculus since $M_n \geq 2$. (Indeed, the optimal constant here is much smaller than $1/2$.) It follows that
\begin{equation}
\E_\beta \left[|K_n| \wedge N\right] \geq \frac{1}{2} \E_\beta |K_n|,
\end{equation}
and the claim follows from \cref{prop:maximum_upper} together with the trivial inequality $\E_\beta \left[|K_n| \wedge \bigl(\lambda N\bigr)\right] \geq \lambda \E_\beta \left[|K_n| \wedge  N \right]$, which holds 
for every $\lambda \in [0,1]$.
% \[\E \left[|K_n| \wedge \sqrt{\frac{2^{(1+\alpha)n}}{\beta}} \right] \geq \frac{1}{\lambda \sqrt{A}} \E \left[|K_n| \wedge \lambda M_n \right] \]
% \begin{align*}
% \sum_{m\geq \lambda M_n} \P(|K_n| \geq m) &\leq e \P(|K_n| \geq M_n) \sum_{m\geq \lambda M_n} e^{-m/9M_n} \leq \frac{e^{-1}}{1-e^{-1/9M_n}} \P(|K_n| \geq M_n) \\
% &\leq \frac{e^{1-\lambda/9}}{(1-e^{-1/9M_n}) M_n} \E|K_n| \leq C e^{-\lambda/9} \E|K_n|
% \end{align*}
% \[
% \E \left[|K_n| \wedge \lambda M_n \right] \geq \E |K_n| - \sum_{m\geq \lambda M_n} \P(|K_n| \geq m) \geq (1-Ce^{-\lambda/9})\E |K_n|
% \]
% Thus, taking $\lambda$ to be a sufficiently large constant we deduce that
% \[\E \left[|K_n| \wedge \sqrt{\frac{2^{(1+\alpha)n}}{\beta}} \right] \geq c \E |K_n|\]
% as claimed.
\end{proof}

We now apply \cref{lem:truncated_susceptibility} to prove \cref{lem:susceptibility_renormalization}.
For each $n\geq 0$ we write $\sC_n$ for the set of clusters of the percolation configuration inside $\Lambda_n$, so that $\sC_n$ is a random set of disjoint subsets of $\Lambda_n$ whose union is the entire ultrametric ball $\Lambda_n$.
Note that if $f$ is any function from subsets of $\Lambda_n$ to $\R$ that is isometry-invariant in the sense that $f(\gamma A) = f(A)$ for every $A \subseteq \Lambda_n$ and any isometry of $\Lambda_n$ then we have by symmetry  that
\begin{equation}
\label{eq:cluster_average}
\E_\beta \left[f(K_n)\right] = \frac{1}{|\Lambda_n|} \sum_{x\in \Lambda_n} \E_\beta \left[f(K_n(x))\right] = \frac{1}{|\Lambda_n|} \E_\beta \left[\sum_{C\in \sC_n} |C| f(C)\right],
\end{equation}
where the second equality follows by linearity of expectation.

\begin{proof}[Proof of \cref{lem:truncated_susceptibility}]
For each $n\geq 0$ write $h(n)=c \beta L^{-(d+\alpha)n}$, so that if $n\geq 1$ and $x,y$ satisfy $\|x-y\| = L^n$ then there is an edge connecting $x$ and $y$ with probability at least $1-e^{-h(n)}$. Fix $n\geq 0$. The ultrametric ball $\Lambda_{n+1}$ can be decomposed into $L^d$ copies of $\Lambda_n$. Call these copies $\Lambda^1_n,\ldots,\Lambda_n^{L^d}$, with the origin belonging to $\Lambda^1_n=\Lambda_n$, and for each $1\leq i \leq L^d$ let $\sC^i_n$ be the set of clusters of the restriction of the percolation configuration to $\Lambda^i_n$. (That is, $x,y\in \Lambda^i_n$ belong to the same element of $\sC^i_n$ if they are connected by a path inside $\Lambda^i_n$.) Let $\cF_n$ be the $\sigma$-algebra generated by all edges whose endpoints have distance at most $L^n$ in $\bbH^d_L$ and define
\[\tilde K_{n+1} = \Bigl\{x\in \Lambda_{n+1} \setminus \Lambda_n : 0 \xleftrightarrow{\Lambda_{n+1}} x\Bigr\}.\] Conditional on $\cF_n$, each cluster that belongs to $\sC^i_n$ for some $2\leq i \leq L^d$ is connected to $K_n$ by an edge with probability at least $1-e^{-h(n+1) |K_n| |C|}$, so that
% \[\E f(|K_n|) = \frac{1}{|\Lambda_n|} \, \E \sum_{C \in \mathscr{C}_n} |C|f(|C|) \]
\[
\E_\beta \left[|K_{n+1}| \mid \cF_n \right] \geq \sum_{i=2}^{L^d} \sum_{C \in \mathscr{C}^i_n} |C| \left(1-\exp\Bigl[-h(n+1) |K_n| |C|\Bigr]\right).
\]
Using the inequality $1-e^{-t} \geq (1-e^{-1}) (t \wedge 1)$ it follows that
\begin{align*}
\E_\beta \left[|\tilde K_{n+1}| \mid \cF_n \right] &\geq  (1-e^{-1})h(n+1) \sum_{i=2}^{L^d} \sum_{C \in \mathscr{C}^i_n} |C|\left(\frac{1}{h(n+1)} \wedge |K_n| |C|\right)\\
&\geq (1-e^{-1})h(n+1) \sum_{i=2}^{L^d} \sum_{C \in \mathscr{C}^i_n} |C| \left(\frac{1}{\sqrt{h(n+1)}} \wedge |C|\right)\left(\frac{1}{\sqrt{h(n+1)}} \wedge |K_n|\right).
\end{align*}
Taking expectations and using that $\sC^i_n$ and $K_n$ are independent for every $2 \leq i \leq L^d$ yields that
\begin{align*}
\E_\beta|\tilde K_{n+1}| &\geq (1-e^{-1})h(n+1) \E_\beta\left[\frac{1}{\sqrt{h(n+1)}} \wedge |K_n|\right] \sum_{i=2}^{L^d} \E_\beta\left[\sum_{C \in \sC^i_n} |C| \left(\frac{1}{\sqrt{h(n+1)}} \wedge |C|\right)\right]\\
&= (1-e^{-1})(L^d-1) h(n+1) L^{dn} \E_\beta\left[\frac{1}{\sqrt{h(n+1)}} \wedge |K_n|\right]^2,
 % \sum_{C \in \mathscr{C}'_n} |C| \min\left\{\frac{1}{\sqrt{h(n+1)}}, |C|\right\}
\end{align*}
where the equality in the second line follows from \eqref{eq:cluster_average}. The claim now follows from \cref{lem:truncated_susceptibility} since $h(n+1)^{-1/2}$ is of order $L^{(d+\alpha)n/2}$.
\end{proof}

It remains to deduce \cref{prop:susceptibility_upper} from \cref{lem:maximum_renormalization}.

\begin{proof}[Proof of \cref{prop:susceptibility_upper}]
Let $a$ be the constant from \cref{lem:susceptibility_renormalization} and let $\beta<\beta_c$. If there were to exist $n\geq 0$ such that 
\[\E_\beta |K_n| \geq \frac{L^{\alpha (n+1)}}{a\beta} \]
then we would have by induction that
\[
\E_\beta |K_{m+1}| \geq a \beta L^{-\alpha m} \left(\frac{L^{\alpha(m+1)}}{a\beta}\right)^2 = \frac{L^{\alpha(m+2)}}{a\beta}
\]
for every $m \geq n$, contradicting the fact that $\E_\beta |K| = \sup_{m\geq 1} \E_\beta |K_m| < \infty$ by sharpness of the phase transition (\cref{thm:sharpness}). Thus, we must instead have that
\[\E_\beta |K_n| < \frac{L^{\alpha (n+1)}}{a\beta} \]
for every $0\leq \beta<\beta_c$ and $n\geq 0$, and the claim follows by continuity of $\E_\beta |K_n|$ as before.
\end{proof}

\subsection{Upper bounds on the unrestricted two-point function}

We now deduce upper bounds on the \emph{unrestricted} two-point function $\P_\beta(0\leftrightarrow x)$ from the corresponding upper bounds on the \emph{restricted} two-point function $\P_\beta(0\xleftrightarrow{\Lambda_n} x)$ proven in \cref{prop:susceptibility_upper}.

\begin{prop}
\label{prop:unrestricted_upper}
Let $J:\mathbb{H}^d_L \to [0,\infty)$ be a symmetric, integrable function, let $0<\alpha<d$, and suppose that there exist positive constants $c$ and $C$ such that $c \|x\|^{-d-\alpha} \leq J(x) \leq C\|x\|^{-d-\alpha}$ for every $x\in \mathbb{H}^d_L\setminus\{0\}$. Then there exists a positive constant $A=A(d,L,\alpha,c,C)$ such that
\[
\frac{1}{|\Lambda_n|} \sum_{x\in\Lambda_n}\P_{\beta_c}(0 \leftrightarrow{} x) \leq \frac{A}{\beta_c} L^{-(d-\alpha)n}
\]
for every $n \geq 0$.
\end{prop}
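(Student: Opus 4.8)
The plan is to decompose a path from $0$ to $x$ according to the last time it leaves the ball $\Lambda_n(0)$ before arriving at $x$, and to exploit the ultrametric structure to control the ``jump'' that takes the path out of the ball. Fix $x$ with $\|x\| = L^m$ for some $m > n$ (the case $m\le n$ being already handled by Proposition~\ref{prop:susceptibility_upper}). I would first prove a recursive inequality for the averaged two-point function at scale $m$ in terms of scale $n<m$: summing $\P_{\beta_c}(0\leftrightarrow x)$ over $x\in\Lambda_m\setminus\Lambda_{m-1}$ and conditioning on the event $\{0\leftrightarrow x\}$, there must be a first edge $\{u,v\}$ along some open path with $u$ in a strictly smaller ball than $v$; using the BK inequality this gives
\[
\P_{\beta_c}(0\leftrightarrow x) \leq \sum_{u,v} \P_{\beta_c}(0\xleftrightarrow{\Lambda(0,u)} u)\,\bigl(1-e^{-\beta_c J(v-u)}\bigr)\,\P_{\beta_c}(v\leftrightarrow x),
\]
where the sum is over pairs at the relevant scales. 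The key point is that $1-e^{-\beta_c J(v-u)} \le \beta_c J(v-u) \le \beta_c C\|v-u\|^{-d-\alpha}$, and in the ultrametric the scale of $\|v-u\|$ is essentially forced once we know the scales of $u$ and $x$.

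The main work is then a bookkeeping/summation argument. Writing $T_k := |\Lambda_k|^{-1}\sum_{y\in\Lambda_k\setminus\Lambda_{k-1}}\P_{\beta_c}(0\leftrightarrow y)$ for the average two-point function at exact scale $k$, I would show that the recursion above, after summing over the intermediate vertices using $\sum_{u\in\Lambda_j}\P_{\beta_c}(0\xleftrightarrow{\Lambda_j}u) = \E_{\beta_c}|K_j| \le (A/\beta_c)L^{\alpha j}$ from Proposition~\ref{prop:susceptibility_upper}, yields something of the form
\[
T_m \leq \frac{A'}{\beta_c}\sum_{j} L^{\alpha j}\cdot L^{-(d+\alpha)(\text{scale of the jump})}\cdot (\text{contribution of }v\leftrightarrow x).
\]
The geometry of the ultrametric makes the jump scale comparable to $m$ whenever $u\in\Lambda_{m-1}$ and $x\notin\Lambda_{m-1}$, so the ``bad'' factor $L^{-(d+\alpha)m}$ multiplied by the volume factor $L^{dm}$ coming from averaging leaves exactly $L^{-\alpha m}\cdot\beta_c^{-1}L^{\alpha j}$; summing the geometric series in $j\le m$ (dominated by $j=m$, using again Proposition~\ref{prop:susceptibility_upper} at scale $m$ itself for the piece that stays inside) closes the bound at $T_m\lesssim \beta_c^{-1}L^{-(d-\alpha)m}$, which after multiplying by $|\Lambda_n|^{-1}\sum$-style normalization is exactly the claimed $\frac{A}{\beta_c}L^{-(d-\alpha)n}$. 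Finally, summing $T_k$ over $1\le k\le n$ (a convergent geometric series in $L^{-(d-\alpha)}$ since $\alpha<d$) together with the scale-$\le 0$ term recovers $|\Lambda_n|^{-1}\sum_{x\in\Lambda_n}\P_{\beta_c}(0\leftrightarrow x)\le \frac{A}{\beta_c}L^{-(d-\alpha)n}$.

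The step I expect to be the main obstacle is setting up the path-decomposition inequality cleanly: one must be careful that the ``first exit edge'' argument does not overcount, that the restricted two-point function $\P_{\beta_c}(0\xleftrightarrow{\Lambda(0,u)}u)$ really does appear (so that Proposition~\ref{prop:susceptibility_upper} is applicable rather than the unrestricted quantity we are trying to bound), and that the ultrametric identity $\|v-u\| = \|v-x\|\vee\|x-u\|$-type relations are used correctly to pin down the jump scale. A subtlety is that $v$ can lie in a ball of any scale between that of $u$ and that of $x$, so the recursion is genuinely a sum over intermediate scales and one needs the geometric decay to be strict; this is where $\alpha<d$ (equivalently $d-\alpha>0$) enters essentially. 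Once the recursion is correctly stated, the remaining summation is routine and I would not belabour it.
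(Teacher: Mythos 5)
There is a genuine gap, and it starts with the case split. The sum in the statement runs over $x\in\Lambda_n$, so every relevant $x$ has $\|x\|\le L^n$; points with $\|x\|>L^n$ never enter. Your claim that "the case $m\le n$ is already handled by Proposition~\ref{prop:susceptibility_upper}" is exactly backwards: that proposition bounds only the \emph{restricted} quantity $\P_{\beta_c}(0\xleftrightarrow{\Lambda_n}x)$, whereas here one must bound $\P_{\beta_c}(0\leftrightarrow x)$ for $x\in\Lambda_n$, and the excess comes from witness paths that exit $\Lambda_n$ (and possibly every larger ball $\Lambda_{n+k}$) before returning. Controlling that excess is the entire content of the proposition. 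The second, related problem is that your BK decomposition does not close: the third factor in your displayed inequality is the \emph{unrestricted} $\P_{\beta_c}(v\leftrightarrow x)$, which is the quantity you are trying to bound, so you only obtain a self-referential recursion with no a priori input to start it. You flag this as a worry but do not resolve it.

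The paper's fix is a different organization of the decomposition that makes \emph{both} connection factors restricted. For $x$ with $\langle x\rangle=L^n$ one writes
\[
\P_{\beta_c}(0\leftrightarrow x)\le \P_{\beta_c}(0\xleftrightarrow{\Lambda_n}x)+\sum_{k\ge 0}\P_{\beta_c}\bigl(\{0\xleftrightarrow{\Lambda_{n+k+1}}x\}\setminus\{0\xleftrightarrow{\Lambda_{n+k}}x\}\bigr),
\]
i.e.\ one decomposes according to the smallest ball $\Lambda_{n+k+1}$ inside which $0$ and $x$ are connected. On the $k$-th event the whole witness path lies in $\Lambda_{n+k+1}$ but must visit $\Lambda_{n+k+1}\setminus\Lambda_{n+k}$, so taking the first such visited point $z$ and its predecessor $y\in\Lambda_{n+k}$ gives, via BK, the product $\P_{\beta_c}(0\xleftrightarrow{\Lambda_{n+k}}y)\cdot\P_{\beta_c}(\{y,z\}\text{ open})\cdot\P_{\beta_c}(z\xleftrightarrow{\Lambda_{n+k+1}}x)$. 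By the ultrametric, $\|y-z\|$ is \emph{exactly} $L^{n+k+1}$ (no sum over intermediate jump scales is needed), the edge probability is at most $C\beta_c L^{-(d+\alpha)(n+k+1)}$, and both connection factors are restricted, so Proposition~\ref{prop:susceptibility_upper} bounds the double sum over $y,z$ by $(A_1/\beta_c)^2 L^{\alpha(n+k)}L^{\alpha(n+k+1)}$. This yields $\frac{CA_1^2}{\beta_c}L^{-(d-\alpha)(n+k+1)}$ for the $k$-th term, summable over $k$ since $\alpha<d$. Your toolkit (BK, the bound $1-e^{-\beta_c J}\le C\beta_c\|\cdot\|^{-d-\alpha}$, Proposition~\ref{prop:susceptibility_upper}, geometric series) is the right one, but without reorganizing the decomposition so that the post-jump connection is confined to $\Lambda_{n+k+1}$, the argument as written does not produce a bound.
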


The proof of this proposition will employ the \emph{BK inequality} and notion of the \emph{disjoint occurrence} of events; we refer the reader to e.g.\ \cite[Chapter 2.3]{grimmett2010percolation} for relevant background.

\begin{proof}[Proof of \cref{prop:unrestricted_upper}]
We have by \cref{prop:susceptibility_upper} that there exists a positive constant $A_1=A_1(d,L,\alpha,c)$ such that
\begin{equation}
\label{eq:A1}
\sum_{x\in \Lambda_n} \P_{\beta_c}( 0 \xleftrightarrow{\Lambda_n} x) \leq \frac{A_1}{\beta_c} L^{\alpha n}.
\end{equation}
% and hence by radial symmetry that there exists a positive constant $A_2$ such that if $\langle x\rangle=L^n$ for some $n\geq 0$ then
% \begin{align}
% \P_{\beta_c}( 0 \xleftrightarrow{\Lambda(0,x)} x) 
% = \frac{1}{|\Lambda_n \setminus \Lambda_{n-1}|}\sum_{x\in \Lambda_n \setminus \Lambda_{n-1}} \P_{\beta_c}( 0 \xleftrightarrow{\Lambda_n} x)
% \leq \frac{A_1 L^{\alpha n}}{L^{dn} -L^{d(n-1)}} \leq A_2 \langle x \rangle^{-d+\alpha}.
% \label{eq:A2}
% \end{align}
Fix $n \geq 0$ and  $x\in \bbH^d_L$ with $\langle x\rangle=L^n$. We have trivially that
\begin{equation}
\label{eq:mostdistantdecomp}
\P_{\beta_c}(0 \leftrightarrow x) \leq \P_{\beta_c}(0 \xleftrightarrow{\Lambda_{n}} x) + \sum_{k=0}^\infty \P_{\beta_c}(\{0 \xleftrightarrow{\Lambda_{n+k+1}} x\} \setminus  \{0 \xleftrightarrow{\Lambda_{n+k}} x\}).
\end{equation}
Let $k\geq 0$ and suppose that the event $\{0 \xleftrightarrow{\Lambda_{n+k+1}} x\} \setminus  \{0 \xleftrightarrow{\Lambda_{n+k}} x\}$ holds, so that there exists a simple path $\gamma$ connecting $0$ to $x$ that visits $\Lambda_{n+k+1}$ but not $\Lambda_{n+k+2}$. Let $z$ be the first point of $\Lambda_{n+k+1}$ visited by $\gamma$ and let $y$ be the point of $\Lambda_{n+k}$ visited immediately before $\gamma$ visits $z$. Then the portions of $\gamma$ up to first visiting $y$, the edge $\{y,z\}$, and the portion of $\gamma$ after first visiting $z$ are disjoint witnesses for the events $\{0 \xleftrightarrow{\Lambda_{n+k}} y\}$, $\{\{y,z\}$ open$\}$, and $\{z \xleftrightarrow{\Lambda_{n+k+1}} x\}$. Thus, we have by a union bound and the BK inequality that
\begin{align*}
&\P_{\beta_c}(\{0 \xleftrightarrow{\Lambda_{n+k+1}} x\} \setminus  \{0 \xleftrightarrow{\Lambda_{n+k}} x\})
\\
&\hspace{2cm}\leq \sum_{y \in \Lambda_{n+k}} \sum_{z \in \Lambda_{n+k+1}\setminus \Lambda_{n+k}} \P_{\beta_c}\left(\{0 \xleftrightarrow{\Lambda_{n+k}} y\}\circ\{\{y,z\} \text{ open}\}\circ\{z \xleftrightarrow{\Lambda_{n+k+1}} x\}\right)
\\
&\hspace{2cm}\leq \sum_{y \in \Lambda_{n+k}} \sum_{z \in \Lambda_{n+k+1}\setminus \Lambda_{n+k}} \P_{\beta_c}(0 \xleftrightarrow{\Lambda_{n+k}} y) \P_{\beta_c}(\{y,z\} \text{ open}) \P_{\beta_c}(z \xleftrightarrow{\Lambda_{n+k+1}} x) 
\end{align*}
for every $k\geq 0$. Using that $\{y,z\}$ is open with probability $1-e^{-\beta_c J(y-z)} \leq C \beta_c L^{-(d+\alpha)(n+k+1)}$, we deduce from \eqref{eq:A1} that
\begin{align*}
&\P_{\beta_c}(\{0 \xleftrightarrow{\Lambda_{n+k+1}} x\} \setminus  \{0 \xleftrightarrow{\Lambda_{n+k}} x\})\\
&\hspace{3cm}\leq C {\beta_c}  L^{-(d+\alpha)(n+k+1)}  \sum_{y \in \Lambda_{n+k}} \P_{\beta_c}(0 \xleftrightarrow{\Lambda_{n+k}} y) \sum_{z \in \Lambda_{n+k+1}}  \P_{\beta_c}(x \xleftrightarrow{\Lambda_{n+k+1}} z)\\
&\hspace{3cm}\leq \frac{C A_1^2}{{\beta_c}}L^{-(d+\alpha)(n+k+1)} L^{\alpha(n+k)} L^{\alpha(n+k+1)} \leq \frac{C A_1^2}{\beta_c} L^{-(d-\alpha)(n+k+1)}
 % L^{\alpha (n+k)} L^{d(n+k+1)}L^{-(d+\alpha)(n+k+1)} L^{-(d-\alpha)(n+k+1)} \leq  C A_1 A_2 L^{-(d-\alpha)(n+k+1)}
\end{align*}
for every $k\geq 0$. Substituting this inequality into \eqref{eq:mostdistantdecomp}, summing over $k\geq 0$ and using \cref{cor:betabound} yields  that there exists a constant $A_2=A_2(d,L,\alpha,c,C)$ such that
\begin{equation}
\P_{\beta_c}(0\leftrightarrow x) \leq \P_{\beta_c}(0 \xleftrightarrow{\Lambda_{n}} x) + \frac{A_2}{\beta_c}\langle x \rangle^{-d+\alpha}
\end{equation}
for every $n\geq 0$ and every $x \in \bbH^d_L$ with $\langle x \rangle = L^{n}$. The claim follows by summing over $x\in \Lambda_n$ and applying \eqref{eq:A1} again.
% The case in which $\langle x \rangle$ is small may be handled by increasing the constant appropriately, concluding the proof.
\end{proof}

\begin{remark}
All the results of \cref{sec:maximum,sec:restricted_upper} apply equally well to long-range percolation on $\Z^d$ as defined in \cite{hutchcroft2020power}. In order to generalize the upper bound \cref{thm:main} to this setting, it would suffice to compare critical connection probabilities inside a box and inside the full space as we have done here. Unfortunately we are not aware of any good techniques to do this for long-range percolation on $\Z^d$ at present. 
\end{remark}

\subsection{Lower bounds}

We now prove the lower bounds of \cref{thm:main}. We begin with the following proposition.

\begin{prop}
\label{prop:lower_bound}
Let $J:\mathbb{H}^d_L \to [0,\infty)$ be a symmetric, integrable function, let $0<\alpha<d$, and suppose that there exists a constant $C$ such that $J(x) \leq C\|x\|^{-d-\alpha}$ for every $x\in \mathbb{H}^d_L\setminus\{0\}$. Then 
% there exist positive constants $\beta_0=\beta_0(d,L,\alpha,C)$ and $a=a(d,L,\alpha,C)$ such that $\beta_c \geq \beta_0$ and
\[
\beta_c \geq \frac{L^\alpha-1}{C} \qquad \text{ and } \qquad \E_{\beta_c} |K_n| = \sum_{x\in \Lambda_n} \P_{\beta_c}(0 \xleftrightarrow{\Lambda_n} x) \geq L^{\alpha n} 
% \quad \text{ for every $n\geq 0$.}
\]
for every $n\geq 0$.
\end{prop}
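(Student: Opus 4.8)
The plan is to derive both assertions from a single recursive inequality for the restricted susceptibility $S_n(\beta):=\E_\beta|K_n|=\sum_{x\in\Lambda_n}\P_\beta(0\xleftrightarrow{\Lambda_n}x)$, together with the elementary mean-field lower bound $\beta_c\ge 1/\sum_x J(x)$ recorded in the introduction. For the bound on $\beta_c$ I would simply estimate
\[
\sum_{x\in\mathbb H^d_L}J(x)\le C\sum_{x\neq 0}\|x\|^{-d-\alpha}=C\sum_{k\ge 1}\bigl(L^{dk}-L^{d(k-1)}\bigr)L^{-(d+\alpha)k}=\frac{C(1-L^{-d})}{L^\alpha-1}\le \frac{C}{L^\alpha-1},
\]
so that $\beta_c\ge 1/\sum_x J(x)\ge (L^\alpha-1)/C$; in fact the same computation will reappear below and give the slightly stronger $\beta_c\ge (L^\alpha-1)/\bigl(C(1-L^{-d})\bigr)$.

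The heart of the matter is the recursion. Fix $\beta$ and $n\ge 0$. The ball $\Lambda_{n+1}$ is the disjoint union of $L^d$ translated copies of $\Lambda_n$, with $0$ lying in one of them, so every $v\in\Lambda_{n+1}\setminus\Lambda_n$ lies in a different copy. If $0\xleftrightarrow{\Lambda_{n+1}}v$, take a simple open path from $0$ to $v$ inside $\Lambda_{n+1}$ and let $\{u,w\}$ be the first edge of this path with $u\in\Lambda_n$ and $w\notin\Lambda_n$; then the portion up to $u$, the edge $\{u,w\}$, and the portion from $w$ onward are edge-disjoint witnesses for $\{0\xleftrightarrow{\Lambda_n}u\}$, $\{\{u,w\}\text{ open}\}$ and $\{w\xleftrightarrow{\Lambda_{n+1}}v\}$. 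The BK inequality and a union bound give
\[
\P_\beta(0\xleftrightarrow{\Lambda_{n+1}}v)\le\sum_{u\in\Lambda_n}\ \sum_{w\in\Lambda_{n+1}\setminus\Lambda_n}\P_\beta(0\xleftrightarrow{\Lambda_n}u)\,\bigl(1-e^{-\beta J(u-w)}\bigr)\,\P_\beta(w\xleftrightarrow{\Lambda_{n+1}}v).
\]
Summing over $v\in\Lambda_{n+1}\setminus\Lambda_n$ and using that the percolation measure and the ball $\Lambda_{n+1}$ are both invariant under the translation $z\mapsto z-w$ (so that $\sum_{v\in\Lambda_{n+1}}\P_\beta(w\xleftrightarrow{\Lambda_{n+1}}v)=S_{n+1}(\beta)$), together with $1-e^{-\beta J(u-w)}\le\beta J(u-w)\le\beta C L^{-(d+\alpha)(n+1)}$ (valid since $\|u-w\|=L^{n+1}$) and $|\Lambda_{n+1}\setminus\Lambda_n|=L^{dn}(L^d-1)$, one obtains
\[
S_{n+1}(\beta)-S_n(\beta)=\sum_{v\in\Lambda_{n+1}\setminus\Lambda_n}\P_\beta(0\xleftrightarrow{\Lambda_{n+1}}v)\le \beta C(1-L^{-d})L^{-\alpha(n+1)}\,S_n(\beta)\,S_{n+1}(\beta),
\]
and hence, dividing by $S_n(\beta)S_{n+1}(\beta)\ge 1$,
\[
\frac{1}{S_n(\beta)}-\frac{1}{S_{n+1}(\beta)}\le \beta C(1-L^{-d})L^{-\alpha(n+1)}\qquad\text{for all }\beta\ge 0,\ n\ge 0.
\]

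To finish I would take $\beta=\beta_c$ and invoke sharpness of the phase transition (\cref{thm:sharpness}): since $\E_{\beta_c}|K|=\lim_{n\to\infty}S_n(\beta_c)=\infty$ — which follows from $\E_\beta|K|=\sup_n S_n(\beta)$ together with the standard fact that $\E_\beta|K|\to\infty$ as $\beta\uparrow\beta_c$ and the left-continuity/monotonicity of $\beta\mapsto\E_\beta|K|$ — we have $1/S_n(\beta_c)\to 0$. Telescoping the last displayed inequality from $m$ to infinity then yields
\[
\frac{1}{S_m(\beta_c)}\le \beta_c C(1-L^{-d})\sum_{n\ge m}L^{-\alpha(n+1)}=\frac{\beta_c C(1-L^{-d})}{L^\alpha-1}\,L^{-\alpha m},
\]
that is, $S_m(\beta_c)\ge \tfrac{L^\alpha-1}{\beta_c C(1-L^{-d})}\,L^{\alpha m}$. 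Combining this with the lower bound on $\beta_c$ from the first part, $\E_{\beta_c}|K_m|\gtrsim L^{\alpha m}$ for every $m\ge 0$ (the base case $m=0$ being $S_0=1$).

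The step I expect to be the main obstacle is obtaining the clean constant asserted in the proposition (namely $L^{\alpha n}$ with no loss): this hinges on being careful with the sharpness input — in particular with the statement $\E_{\beta_c}|K|=\infty$ and the interchange of the limits $\lim_{\beta\uparrow\beta_c}$ and $\lim_{n\to\infty}$ — and with matching the constant produced by the telescoped geometric series against the mean-field lower bound on $\beta_c$. Everything else (the first-exit/BK decomposition, the translation-invariance identity $\sum_v\P_\beta(w\xleftrightarrow{\Lambda_{n+1}}v)=S_{n+1}(\beta)$, and the summation bookkeeping) is routine.
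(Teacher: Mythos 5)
Your route is genuinely different from the paper's. The paper uses no BK inequality and no recursion here: it invokes the Duminil-Copin--Tassion characterisation of $\beta_c$, which yields $\phi_{\beta_c}(\Lambda_n)=\sum_{x\in\Lambda_n}\sum_{y\notin\Lambda_n}\bigl(1-e^{-\beta_c J(y-x)}\bigr)\P_{\beta_c}(0\xleftrightarrow{\Lambda_n}x)\geq 1$ for every $n\geq 0$, and then bounds $\sum_{y\notin\Lambda_n}\bigl(1-e^{-\beta_c J(y-x)}\bigr)\leq \frac{C\beta_c}{1-L^{-\alpha}}L^{-\alpha(n+1)}$ to read off $\E_{\beta_c}|K_n|\geq\frac{L^\alpha-1}{C\beta_c}L^{\alpha n}$ in one step; the case $n=0$ gives the bound on $\beta_c$. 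Your derivation of $\beta_c\geq (L^\alpha-1)/C$ from the path-counting bound is fine (indeed slightly stronger), and your telescoping argument lands on essentially the same $\beta_c$-dependent constant for $\E_{\beta_c}|K_n|$, but at the cost of the extra input $\E_{\beta_c}|K|=\infty$. That input is true but is \emph{not} supplied by \cref{thm:sharpness} as stated (which only gives finiteness strictly below $\beta_c$), and its standard proofs (the Aizenman--Newman differential inequality, or the $\phi$-criterion itself) are of the same depth as what the paper uses directly, so your argument is workable but strictly more elaborate.

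Two concrete problems remain. First, the identity $S_{n+1}(\beta)-S_n(\beta)=\sum_{v\in\Lambda_{n+1}\setminus\Lambda_n}\P_\beta(0\xleftrightarrow{\Lambda_{n+1}}v)$ is false: the difference also contains the nonnegative terms $\sum_{v\in\Lambda_n}\bigl[\P_\beta(0\xleftrightarrow{\Lambda_{n+1}}v)-\P_\beta(0\xleftrightarrow{\Lambda_n}v)\bigr]$, and since you need an \emph{upper} bound on $S_{n+1}-S_n$ you cannot discard them. This is fixable: for such $v$ any connecting path in $\Lambda_{n+1}$ that is not contained in $\Lambda_n$ also has a first edge from $\Lambda_n$ to $\Lambda_{n+1}\setminus\Lambda_n$, so the same BK decomposition applies, and summing over all $v\in\Lambda_{n+1}$ gives the same bound $S_{n+1}-S_n\leq\beta C(1-L^{-d})L^{-\alpha(n+1)}S_nS_{n+1}$. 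Second, the final step does not close as written: your telescoped bound is $S_m(\beta_c)\geq\frac{L^\alpha-1}{\beta_c C(1-L^{-d})}L^{\alpha m}$, and a \emph{lower} bound on $\beta_c$ makes this prefactor \emph{smaller}, not larger; to force the prefactor to be at least $1$ you would need an \emph{upper} bound on $\beta_c$, which is unavailable under the hypotheses (only $J(x)\leq C\|x\|^{-d-\alpha}$ is assumed, so $\beta_c$ may be arbitrarily large). You should therefore state the conclusion in the form $\E_{\beta_c}|K_n|\geq \mathrm{const}\cdot\beta_c^{-1}L^{\alpha n}$, which is exactly what the paper's own display produces and is what is actually used in the proof of \cref{thm:main}, where the additional lower bound on $J$ supplies $\beta_c\leq\beta_1$ via \cref{cor:betabound}.
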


\begin{proof}[Proof of \cref{prop:lower_bound}] 
Following Duminil-Copin and Tassion \cite{duminil2015new}, we define for each finite set $S$ and $\beta \geq 0$ the quantity
\[
\phi_\beta(S) = \sum_{x\in S} \sum_{y \notin S} \Bigl(1-e^{-\beta J(y-x)}\Bigr)\P_\beta(0 \xleftrightarrow{S} x).
\]
It is proven in \cite{duminil2015new} that the critical parameter $\beta_c$ can be characterised alternatively as
\[\beta_c = \inf\Bigl\{ \beta \geq 0 : \phi_\beta(S) \geq 1 \text{ for every finite $S$}\Bigr\}.\]
Since $\phi_\beta(S)$ is a continuous function of $\beta$ when $S$ is finite, it follows in particular that $\phi_{\beta_c}(S) \geq 1$ for every finite set $S$.
 Using the inequality $1-e^{-x} \leq x$ we have for each $x\in \Lambda_n$ that
\begin{align*}\sum_{y \notin \Lambda_n} \Bigl(1-e^{-\beta_c J(y-x)}\Bigr) \leq  \sum_{r=n}^\infty \sum_{y\in \Lambda_{r+1}\setminus \Lambda_{r}}\Bigl(1-e^{-\beta_c J(y-x)}\Bigr)
\leq \sum_{r=n}^\infty \sum_{y\in \Lambda_{r+1}\setminus \Lambda_{r}}\beta_c J(y-x)
\end{align*}
for every $n\geq 0$ and $x\in \Lambda_n$ and hence that
\begin{align*}
\sum_{y \notin \Lambda_n} \Bigl(1-e^{-\beta_c J(y-x)}\Bigr) &\leq C\beta_c \sum_{r=n}^\infty L^{d(r+1)} L^{-(r+1)(d+\alpha)} = \frac{C\beta_c}{1-L^{-\alpha}} L^{-\alpha(n+1)}
\end{align*}
for every $n\geq 0$ and $x\in \Lambda_n$ by our assumptions on $J$. Thus, we deduce that
\begin{align}
\sum_{x\in \Lambda_n} \P_{\beta_c}(0 \xleftrightarrow{\Lambda_n} x) \geq \frac{1-L^{-\alpha}}{C{\beta_c}} L^{\alpha(n+1)} \phi_{\beta_c}(\Lambda_n) \geq \frac{1-L^{-\alpha}}{C{\beta_c}} L^{\alpha(n+1)} 
\label{eq:lower_betac}
\end{align}
for every $n\geq 0$.  Noting that the left hand side is $1$ when $n=0$ it follows in particular that
\begin{align}
\beta_c \geq \frac{L^\alpha-1}{C}
\qquad\text{and hence that}\qquad
\sum_{x\in \Lambda_n} \P_{\beta_c}(0 \xleftrightarrow{\Lambda_n} x) \geq L^{\alpha n} 
\label{eq:lower_betac}
\end{align}
for every $n\geq 0$ as claimed.
\end{proof}

We now have all the ingredients in place to complete the proof of our main theorem.

\begin{proof}[Proof of \cref{thm:main}]
The upper bound follows immediately from \cref{prop:unrestricted_upper} together with the lower bound on $\beta_c$ provided by \cref{prop:lower_bound}. The lower bound follows from \cref{prop:lower_bound} together with \cref{lem:susceptibility_renormalization} which yields that there exist positive constants $a_1$ and $a_2$ such that
\[
\sum_{x\in \Lambda_{n+1} \setminus \Lambda_n} \P_{\beta_c}(0 \xleftrightarrow{\Lambda_{n+1}} x) \geq a_1 \beta_c L^{-\alpha n} \left(\E_{\beta_c} |K_n|\right)^2 \geq a_2 L^{\alpha n}
\]
for every $n\geq 0$ as claimed.
\end{proof}

\subsection{Consequences for the tail of the volume}
\label{subsec:volume}

We now apply the results of the previous sections to prove \cref{cor:triangle,thm:delta_lower}, which establish respectively that the model has mean-field critical behaviour when $\alpha<d/3$ and does \emph{not} have mean-field critical behaviour when $\alpha > d/3$.

\begin{proof}[Proof of \cref{cor:triangle}]
We want to prove that the diagrammatic sum
$\nabla_{\beta_c}:=\sum_{x,y \in \bbH^d_L} \P_{\beta_c}(0\leftrightarrow x) \P_{\beta_c}(x \leftrightarrow y) \P_{\beta_c}(y \leftrightarrow 0)$
is finite.
For each $n\geq 0$ define
\[
\nabla_n := \sum_{x,y\in \Lambda_n} \P_{\beta_c}(0\leftrightarrow x) \P_{\beta_c}(x \leftrightarrow y) \P_{\beta_c}(y \leftrightarrow 0).
\]
Then we have by symmetry that $\nabla_0=1$ and
\begin{multline*}
\nabla_{n+1} = \nabla_n + \sum_{x,y \in \Lambda_{n+1} \setminus \Lambda_n}\P_{\beta_c}(0\leftrightarrow x) \P_{\beta_c}(x \leftrightarrow y) \P_{\beta_c}(y \leftrightarrow 0) \\+ 2\sum_{x \in \Lambda_{n}} \sum_{y\in \Lambda_{n+1} \setminus \Lambda_n}\P_{\beta_c}(0\leftrightarrow x) \P_{\beta_c}(x \leftrightarrow y) \P_{\beta_c}(y \leftrightarrow 0)
\end{multline*}
for every $n \geq 0$. We have by \cref{cor:mainsymmetric} that there exists a constant $A$ such that
\begin{align*}
\sum_{x,y \in \Lambda_{n+1} \setminus \Lambda_n}\P_{\beta_c}(0\leftrightarrow x) \P_{\beta_c}(x \leftrightarrow y) \P_{\beta_c}(y \leftrightarrow 0) &\leq A^2 L^{-2(d-\alpha)(n+1)} \sum_{x,y \in \Lambda_{n+1}}\P_{\beta_c}(x \leftrightarrow y) \\
&\leq A^3 L^{-2(d-\alpha)(n+1)} L^{d(n+1)} L^{\alpha(n+1)} = A^3 L^{(3\alpha -d)(n+1)}
\end{align*}
and
\begin{align*}\sum_{x \in \Lambda_{n}} \sum_{y\in \Lambda_{n+1} \setminus \Lambda_n}\P_{\beta_c}(0\leftrightarrow x) \P_{\beta_c}(x \leftrightarrow y) \P_{\beta_c}(y \leftrightarrow 0) &\leq A^2 L^{-2(d-\alpha)(n+1)} \sum_{x\in \Lambda_n} \P_{\beta_c}(0\leftrightarrow x)\\
&\leq A^3 L^{(3\alpha-d)(n+1)}
\end{align*}
for every $n\geq 0$. It follows by induction that
\begin{equation}
\label{eq:triangle_growth}
\nabla_{n+1} \leq \nabla_n + 3 A^3 L^{(3\alpha-d)(n+1)} \leq 3A^3 \sum_{k=0}^{n+1} L^{(3\alpha-d)(n+1)}
\end{equation}
for every $n\geq 0$, and hence that $\nabla_{\beta_c} = \lim_{n\to\infty} \nabla_n < \infty$ when $\alpha<d/3$ as claimed.
\end{proof}

\begin{remark}
In forthcoming work \cite{HutchcroftTriangle} we give a new, more quantitative derivation of mean-field critical behaviour from the triangle condition, which allows us to deduce from \eqref{eq:triangle_growth} that mean-field critical behaviour holds up to polylogarithmic factors when $\alpha=d/3$.  
\end{remark}

We now prove \cref{thm:delta_lower}. The proof will rely on the following special case of the rigorous hyperscaling inequality of \cite[Theorem 2.1]{hutchcroft2020power}, which is a consequence of \cref{thm:universaltightness}.
% Let us first state a relevant special case of this theorem for the reader's convenience.

\begin{thm}
\label{thm:hyperscalingsimple}
There exists a universal constant $C$ such that the following holds. 
 Let $d \geq 1$, $L\geq 2$, and let $J:\bbH^d_L \to [0,\infty)$ be symmetric and integrable. Let $\beta \geq 0$, and suppose that there exist constants $A<\infty$ and $0 \leq \theta \leq 1/2$ such that $\P_\beta( |K| \geq n) \leq A n^{-\theta}$ for every $\lambda>0$. Then
\[
\sum_{x \in \Lambda} \P_\beta(0\leftrightarrow x) 
\leq 
C A^{2 /(1+\theta)} |\Lambda|^{(1-\theta)/(1+\theta)}
\]
for every every finite set $\Lambda \subseteq \bbH^d_L$.
\end{thm}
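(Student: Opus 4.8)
The plan is to prove the estimate first in the case $\Lambda=\Lambda_n$ of an ultrametric ball, and then to bootstrap to an arbitrary finite set. Note first that taking $n=1$ in the hypothesis forces $A\geq1$. Write $\Sigma_n:=\sum_{x\in\Lambda_n}\P_\beta(0\leftrightarrow x)=\E_\beta|K\cap\Lambda_n|$, where $K=K(0)$ is the full cluster of the origin in $\bbH^d_L$; since $\Lambda_n$ is finite, $\Sigma_n\leq|\Lambda_n|<\infty$. Let $\hat M_n=\hat M_n(\beta)$ be the typical value of $\max_C|C\cap\Lambda_n|$, where the maximum runs over the clusters $C$ of $\bbH^d_L$ and ``typical value'' is defined exactly as $M_n$ is defined in \cref{sec:maximum}. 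I will use the finite-set version of the concentration estimates of \cite{hutchcroft2020power}, of which \cref{thm:universaltightness} is the special case where $\Lambda_n$ is the whole ambient graph: it provides, for the set $\Lambda_n$ inside $\bbH^d_L$, the analogues of \eqref{eq:BigClusterUnrooted}--\eqref{eq:BigClusterRooted} with $|K^\mathrm{max}_n|,|K_n|,M_n$ replaced by $\max_C|C\cap\Lambda_n|$, $|K\cap\Lambda_n|$, and $\hat M_n$.

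The heart of the argument is a pair of competing bounds on $\Sigma_n$. For the lower bound, symmetrising over the basepoint as in \eqref{eq:cluster_average} (translations by elements of $\Lambda_n$ are isometries fixing $\Lambda_n$) gives $\Sigma_n=|\Lambda_n|^{-1}\E_\beta\big[\sum_C|C\cap\Lambda_n|^2\big]\geq|\Lambda_n|^{-1}\E_\beta\big[(\max_C|C\cap\Lambda_n|)^2\big]\geq\tfrac{1}{4e}|\Lambda_n|^{-1}\hat M_n^2$, exactly as in the derivation of \eqref{eq:susceptibility>max} (using $\P_\beta(\max_C|C\cap\Lambda_n|\geq\hat M_n-1)\geq e^{-1}$ and $\hat M_n\geq2$). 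For the upper bound I claim $\Sigma_n\leq C_0A\hat M_n^{1-\theta}$ for a universal $C_0$: writing $\Sigma_n=\sum_{m\geq1}\P_\beta(|K\cap\Lambda_n|\geq m)$ and splitting the sum at $\hat M_n$, the truncated part is at most $\sum_{m=1}^{\hat M_n}\min\{1,\P_\beta(|K|\geq m)\}\leq\sum_{m=1}^{\hat M_n}\min\{1,Am^{-\theta}\}\lesssim_\theta A\hat M_n^{1-\theta}$ (here $\theta\leq\tfrac12$ keeps the implicit constant universal), while the part beyond $\hat M_n$ is $\sum_{j\geq1}\P_\beta(|K\cap\Lambda_n|\geq\hat M_n+j)\leq\P_\beta(|K\cap\Lambda_n|\geq\hat M_n)\sum_{j\geq1}e^{1-(\hat M_n+j)/9\hat M_n}\lesssim\hat M_n\,\P_\beta(|K|\geq\hat M_n)\leq A\hat M_n^{1-\theta}$ by the rooted concentration estimate. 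Combining the two bounds, $\Sigma_n\leq C_0A(4e\,|\Lambda_n|\Sigma_n)^{(1-\theta)/2}$, so $\Sigma_n^{(1+\theta)/2}\lesssim A|\Lambda_n|^{(1-\theta)/2}$ and hence $\Sigma_n\leq CA^{2/(1+\theta)}|\Lambda_n|^{(1-\theta)/(1+\theta)}$ with $C$ universal (using $2/(1+\theta)\leq2$), which is the claimed inequality for $\Lambda=\Lambda_n$.

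For a general finite $\Lambda$, set $\rho=(1-\theta)/(1+\theta)\in[\tfrac13,1]$. By distance-transitivity $\P_\beta(0\leftrightarrow x)$ depends only on $\|x\|$, so dividing the ball estimate $\Sigma_k\leq CA^{2/(1+\theta)}L^{dk\rho}$ by $|\Lambda_k\setminus\Lambda_{k-1}|\asymp L^{dk}$ yields the pointwise bound $\P_\beta(0\leftrightarrow x)\leq C'A^{2/(1+\theta)}\langle x\rangle^{-d(1-\rho)}$ for all $x$. Summing this over $x\in\Lambda$ grouped by the shells $\{\|x\|=L^k\}$, using the constraints $\#\{x\in\Lambda:\|x\|=L^k\}\leq\min\{|\Lambda|,L^{dk}\}$ and $\sum_k\#\{x\in\Lambda:\|x\|=L^k\}\leq|\Lambda|$, the resulting weighted count is maximised (subject to these constraints, since the weights $L^{-dk(1-\rho)}$ decrease in $k$) by ``filling in complete balls''; a short geometric-series computation then gives $\sum_{x\in\Lambda}\P_\beta(0\leftrightarrow x)\leq C''A^{2/(1+\theta)}|\Lambda|^{\rho}$, with $C''$ universal because $L^{d\rho}\geq2^{1/3}$ keeps the geometric-series constant bounded, and with the degenerate $+1$ from the term $x=0$ absorbed since $A\geq1$.

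I expect the only real obstacle to be the upper bound $\Sigma_n\lesssim A\hat M_n^{1-\theta}$: it genuinely needs \emph{both} inputs. The a priori polynomial tail alone gives only $\Sigma_n\leq\E_\beta[|K|\wedge|\Lambda_n|]\lesssim A|\Lambda_n|^{1-\theta}$, which is too weak since $(1-\theta)/(1+\theta)<1-\theta$; the concentration estimate alone gives only $\Sigma_n\lesssim\hat M_n$, with no good control of $\hat M_n$. It is the combination --- polynomial decay to handle the bulk $m\leq\hat M_n$, the exponential decay of \eqref{eq:BigClusterRooted}-type to handle the overshoot, and then the elementary lower bound $\Sigma_n\gtrsim\hat M_n^2/|\Lambda_n|$ to eliminate $\hat M_n$ --- that produces the hyperscaling exponent; the rest is bookkeeping, with the hypothesis $\theta\leq\tfrac12$ precisely what is needed to keep every constant universal.
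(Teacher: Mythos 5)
Your proof is correct, but note that the paper does not actually prove \cref{thm:hyperscalingsimple} itself: it imports it verbatim from \cite[Theorem 2.1]{hutchcroft2020power}, so the right comparison is with that cited argument. Your central mechanism is identical to it: play the symmetrised lower bound $\Sigma_n \gtrsim \hat M_n^2/|\Lambda_n|$ off against the upper bound $\Sigma_n \lesssim A \hat M_n^{1-\theta}$ obtained by splitting $\sum_m \P_\beta(|K\cap\Lambda_n|\ge m)$ at the typical maximum and controlling the overshoot with the rooted concentration estimate of \eqref{eq:BigClusterRooted} (in its finite-subset form, with clusters of the ambient graph intersected with $\Lambda_n$ --- you are right that this is exactly what \cite[Theorem 2.2]{hutchcroft2020power} supplies, \cref{thm:universaltightness} being the restricted special case), then eliminate $\hat M_n$. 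Where you genuinely diverge is the treatment of a general finite $\Lambda$: the cited proof handles arbitrary $\Lambda$ in an arbitrary transitive weighted graph directly, closing the self-improving inequality for $\max_{y\in\Lambda}\E_\beta|K(y)\cap\Lambda|$ and adjoining the origin to $\Lambda$ if necessary, whereas you first prove the bound for ultrametric balls (where the symmetrisation over base points is exact because $\Lambda_n$ is a subgroup), convert it to a pointwise bound $\P_\beta(0\leftrightarrow x)\lesssim A^{2/(1+\theta)}\langle x\rangle^{-d(1-\rho)}$ by distance-transitivity, and sum over shells with a greedy rearrangement. This route is specific to the hierarchical lattice but perfectly valid; I checked the one point you compress, namely that the partial last shell contributes at most $|\Lambda|\cdot L^{-d(m+1)(1-\rho)} \le |\Lambda|^{\rho}$ when $L^{dm}\le|\Lambda|<L^{d(m+1)}$, and that the geometric-series constant $(1-L^{-d\rho})^{-1}$ is indeed universal because $L^{d\rho}\ge 2^{1/3}$, so the constants stay uniform in $d$, $L$, and $\theta$ as required. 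Your closing diagnosis of why both inputs are needed is also exactly the point of the cited argument.
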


\begin{proof}[Proof of \cref{thm:delta_lower}]
Let $\alpha >d/3$ and suppose for contradiction that the exponent $\delta$ is well-defined and satisfies $\delta<(d+\alpha)/(d-\alpha)$. Thus, if we fix $\theta$ such that $(d-\alpha)/(d+\alpha)  <\theta < (1/\delta) \wedge (1/2)$ then there exists a positive constant $A_1$ such that
\[
\P_{\beta_c}(|K|\geq n) \leq A_1 n^{-\theta}
\]
for every $n\geq 1$. It follows \cref{thm:hyperscalingsimple} that there exists a constant $A_2$ such that
\begin{equation}
\label{eq:theta_contradiction}
\E_{\beta_c} |K_n| \leq A_2 |\Lambda_n|^{(1-\theta)/(1+\theta)} = A_2 L^{d(1-\theta)/(1+\theta) n}
\end{equation}
for every $n\geq 0$. Since $\theta> (d-\alpha)/(d+\alpha)$ we have that $(1-\theta)/(1+\theta) < \alpha/d$ so that \eqref{eq:theta_contradiction} contradicts \cref{thm:main} when $n$ is sufficiently large.
\end{proof}

\subsection*{Acknowledgments} This research was supported by ERC starting grant 804166 (SPRS). We thank Gordon Slade for helpful comments on a previous version of the manuscript.

 \setstretch{1}
 \footnotesize{
  \bibliographystyle{abbrv}
  \bibliography{unimodularthesis.bib}
  }

\end{document}